\newif\ifger
\newtheorem{theorem}{Theorem}[section]
\newtheorem{lemma}[theorem]{Lemma}
\newtheorem{remark}[theorem]{Remark}
\newcommand{\cC}{{\mathfrak C}}
\newcommand{\cL}{{\cal L}}
\newcommand{\cP}{{\cal P}}
\newcommand{\cS}{{\cal S}}
\def\la{{\langle}}
\def\PG{{\rm PG}}
\def\Soc{{\rm Soc}}
\def\Aut{{\rm Aut}}
\newcommand{\Fix}{{\rm Fix}}
\newcommand{\Sym}{{\rm Sym}}
\newcommand{\GL}{{\rm GL}}
\newcommand{\AGL}{{\rm AGL}}
\def\la{\lambda}
\def\Fix{{\rm Fix}}
\def\AGL{{\rm AGL}}
\def\Soc{{\rm Soc}}
\title{Classification of line-transitive
point-imprimitive linear spaces with line size at most 12}
\author{Cheryl E. Praeger \\
{\it\small School of Mathematics and Statistics,} \\
{\it \small The University of Western
Australia, Crawley, WA 6009, Australia}\\
{\it\small email: praeger@maths.uwa.edu.au
\medskip }\\
Shenglin Zhou\\
{\it \small Department of Mathematics, Shantou University,}\\
{\it\small  Shantou, Guangdong 515063, P. R. China}\\
{\it \small email: slzhou@stu.edu.cn}}
\begin{document}
\maketitle
\begin{abstract} In this paper we complete a classification of
finite linear spaces $\cS$ with line size at most 12 admitting a
line-transitive point-imprimitive subgroup of automorphisms. The
examples are the Desarguesian projective planes of orders $4,7, 9$ and $11$,
two designs on 91 points with line size 6, and 467 designs on 729
points with line size 8.

\smallskip\noindent
{\bf Keywords}: linear spaces, line-transitive, point-imprimitive.

\noindent
{\bf AMS subject classification}: 05B05, 
05B25, 
20B25 
\end{abstract}


\section{Introduction}

A {\em finite linear space} 
$\cS=(\cP,\cL)$ consists of a finite set $\cP$ whose elements are
called points, and a set $\cL$ of subsets of $\cP$ whose elements
are called lines, such that each pair of points is contained in exactly
one line and each line contains at least two points. It
is said to be \emph{trivial} if there is only one line, or if all
lines have only two points, and otherwise it is called {\it
non-trivial}. The {\em automorphism group} $\Aut(\cS)$ of $\cS$
consists of all permutations of $\cP$ that leave $\cL$ invariant, and a
subgroup $G$ of $\Aut(\cS)$ is said to be {\em line-transitive} if
it acts transitively on $\cL$. For line-transitive linear spaces the size of lines is
constant, say $k$, and for non-trivial ones $2<k<v$, where
$v=|\cP|$. It is possible for a line-transitive group $G$ to leave
invariant a non-trivial partition of $\cP$, and in this case we say
that $G$ is \emph{point-imprimitive} on $\cS$; otherwise $G$ is
called {\em point-primitive}. In 1989, A. Delandtsheer and J. Doyen
proved that, for a given value of the line size, there are only
finitely many linear spaces that admit a line-transitive,
point-imprimitive group. In 1996, A.R. Camina and S. Mischke
(\cite{CaminaMischke96}) classified all line-transitive,
point-imprimitive linear spaces with line size $k$ at most 8, and recently
such linear spaces with $k/\gcd(k,v)\leq 8$ were classified in \cite{BDLNPZ}. Here we
continue this work, and extend these classifications for line sizes
up to 12. Building on the work in \cite{BDLNPZ,CaminaMischke96}, and the 
results of exhaustive computer searches described in \cite{anton,grids,Cresp,NNOPP},  we
obtain the following classification.

\begin{theorem}\label{theorem:1} \quad Let $\cS$ be a non-trivial finite linear space with $v$ points
and line size $k\leq 12$. Assume that $\cS$ has a group of
automorphisms which is line-transitive and point-imprimitive.
Then $\cS$ is one of the following:
\begin{enumerate}
\item[(a)] A Desarguesian projective plane $\PG_2(q)$ with $q\in
\{4,7,9,11\}$;
\item[(b)] One of two linear spaces with $v=91$ and $k=6$, namely the
Mills design or the Colbourn-McCalla design;
\item[(c)] One of $467$ linear spaces with $v=729$ and $k=8$ constructed by
Nickel, Niemeyer, O'Keefe, Penttila and Praeger in {\rm \cite{NNOPP}},
namely the $N^2OP^2$ designs.
\end{enumerate}
\end{theorem}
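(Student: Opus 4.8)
The plan is to build on the two existing classifications and then reduce, via the Delandtsheer--Doyen restrictions, to a finite and explicit list of numerical parameter sets, each of which is settled by an exhaustive computer search. The first reduction disposes of small line size. For $k\le 8$ the statement is exactly the Camina--Mischke classification \cite{CaminaMischke96} (with the difficult case $v=729$, $k=8$ completed by the search of \cite{NNOPP}), whose output---$\PG_2(4)$, $\PG_2(7)$, the Mills and Colbourn--McCalla designs on $91$ points, and the $467$ designs on $729$ points---consists entirely of spaces listed in the theorem; so I would assume $9\le k\le 12$ henceforth. Secondly, if $\gcd(k,v)\ge 2$ then $k/\gcd(k,v)\le k/2\le 6$, so $\cS$ is already covered by the classification in \cite{BDLNPZ} of the linear spaces with $k/\gcd(k,v)\le 8$, which again contributes nothing new in this range. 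Thus it remains to treat $9\le k\le 12$ with $\gcd(k,v)=1$, and here integrality of the replication number $r=(v-1)/(k-1)$ and of $b=vr/k$ forces $k(k-1)\mid v-1$, while the elementary bound $b\ge v$ for non-trivial linear spaces gives $v\ge k^2-k+1=k(k-1)+1$; hence $v=mk(k-1)+1$ for some integer $m\ge 1$, with $m=1$ precisely when $\cS$ is a projective plane of order $k-1$.

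For the second step, fix a nontrivial $G$-invariant partition $\cC$ of $\cP$ into $d$ classes of size $c$, so $v=cd$ with $c,d\ge 2$. Since $G$ is line-transitive and preserves $\cC$, every line meets the classes of $\cC$ in the same pattern, and the Delandtsheer--Doyen counting argument then supplies positive integers $x,y$ with
\[
c=\frac{\binom{k}{2}-x}{y},\qquad d=\frac{\binom{k}{2}-y}{x},
\]
whence $v\le\bigl(\tbinom{k}{2}-1\bigr)^2$. For each $k\in\{9,10,11,12\}$ I would enumerate all quadruples $(c,d,x,y)$ satisfying these equations together with $c,d\ge 2$, $v=cd\equiv 1\pmod{k(k-1)}$, $\gcd(k,v)=1$ and $v\ge k^2-k+1$; this leaves only a short explicit list. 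One can then prune it further with elementary constraints---for instance the forced number $x=(c-1)\binom{k}{2}/(v-1)$ of within-class point pairs on a line must be realizable as $\sum_i\binom{a_i}{2}$ with $a_i\le c$ and $\sum_i a_i=k$, and the induced actions of $G$ on $\cC$ and on a class impose their own divisibility conditions---but a number of parameter sets resist all such tests (for example $v=1225$ with $c=d=35$ when $k=9$, or $v=451$ with $c=11$, $d=41$ when $k=10$).

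For the third step I would, for each of the finitely many surviving parameter sets, invoke the appropriate exhaustive search: the generic and grid-type linear spaces are treated in \cite{anton,grids,Cresp}, and the remaining special configurations in \cite{NNOPP}. The conclusion of these searches is that the only line-transitive point-imprimitive linear space arising for any surviving parameter set with $9\le k\le 12$ is a Desarguesian projective plane, and of the plausible orders $8,9,10,11$ only $9$ and $11$ survive: the plane of order $8$ has $v=73$ prime and so admits no point-imprimitive group at all, and there is no projective plane of order $10$. This produces $\PG_2(9)$ (on $91=7\cdot 13$ points, $k=10$) and $\PG_2(11)$ (on $133=7\cdot 19$ points, $k=12$). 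Combining these with the outcome for $k\le 8$ from the first step yields precisely the list (a)--(c), completing the proof.

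The reductions in the first step and the parameter enumeration in the second are essentially bookkeeping, so the real weight of the argument rests on the third step: several of the surviving parameter sets carry no elementary obstruction and can be disposed of only through the exhaustive searches of \cite{anton,grids,Cresp,NNOPP}. The completeness of the final classification is therefore exactly as strong as the joint completeness of those searches, and the purpose of the first two steps is to guarantee that, taken together, they exhaust every remaining possibility.
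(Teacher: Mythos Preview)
Your first two reductions match the paper, but step three has a genuine gap. The searches in \cite{anton,grids,Cresp} are not searches over Delandtsheer--Doyen parameter sets; each is a search for linear spaces admitting a \emph{prescribed line-regular group} $H$---precisely the five groups listed in Table~\ref{table:k}. Those $(k,v,H)$ triples are the output of Theorem~\ref{theorem:2}, whose proof is the substantive new content of the paper: first the algorithms of \cite{BDLNPZ} (much stronger than the elementary arithmetic and intersection-type tests you describe) cut the possibilities down to the $38$ lines of Table~\ref{Table-fangli}, together with tight constraints on the top and bottom groups $G^{\cC}$ and $G^C$ (Table~\ref{Table-group}) and the fact that $\cC$ is $G$-normal, maximal and minimal; then a case-by-case group-theoretic analysis (Lemmas~\ref{lem:1}--\ref{lem:5}, using Lemmas~\ref{lem:KSXY}--\ref{lem:w}) eliminates most of these lines outright and, in the five survivors, exhibits the specific line-regular subgroup $H$ that the subsequent searches require as input. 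Your proposal bypasses this entire reduction.

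Your own example exposes the problem concretely. The set $v=1225$, $c=d=35$, $k=9$ (with $x=y=1$) passes every divisibility and Delandtsheer--Doyen test you list, yet it is absent from Table~\ref{Table-fangli}: the \cite{BDLNPZ} algorithms already exclude it, and none of \cite{anton,grids,Cresp,NNOPP} addresses it. Conversely, several parameter sets that \emph{do} appear in Table~\ref{Table-fangli}---for instance $v=289$ with $k=9$, $v=221$ with $k=11$, or $v=793$ with $k=12$---are not handled by any of the cited searches either; they are eliminated purely group-theoretically in Section~4 before any search is invoked. So the claim that the cited searches ``exhaust every remaining possibility'' is not supported: those searches were designed for, and only cover, the five cases that remain after Theorem~\ref{theorem:2}, and that theorem is exactly the work your outline omits.
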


In all cases both the full automorphism group $\Aut(\cS)$ and all
line-transitive, point-imprimitive subgroups are known. The main
result that is proved in this paper is Theorem~\ref{theorem:2}. Together with the work 
mentioned above and some non-trivial computer searches discussed in Remark~\ref{rem1},
Theorem~\ref{theorem:2} yields Theorem~\ref{theorem:1}.

\begin{theorem}\label{theorem:2}\quad  Let $\cS=(\cP,\cL)$ be a non-trivial 
finite linear space, admitting a line-transitive, point-imprimitive group
$G$ of automorphisms. Let $v=|\cP|$, and suppose that the line size $k$ 
satisfies $9\leq k\leq 12$ and $\gcd(k,v)=1$. 
Then $G$ has a line regular subgroup $H$, and $k, v, H$
satisfy one of the lines of Table~{\rm\ref{table:k}}. Moreover, in
Lines $1, 3-5$ of Table~{\rm\ref{table:k}}, a point stabilizer in $H$ fixes
a unique point.
\begin{center}
\begin{table}[ht]
$$
\begin{array}{|c|ccc|}
\hline
\mbox{Line}&k & \mbox{$v$}& \mbox{$H$}\\
\hline
1&9 &217& \mbox{$(Z_7\times Z_{31}):Z_3$}\\
2&10&91 & \mbox{$Z_7\times Z_{13}$}\\
3&10&451& \mbox{$(Z_{11}\times Z_{41}):Z_5$}\\
4&11&1431& \mbox{$(Z_3^3\times Z_{53}):Z_{13}$}\\
5&12&133&\mbox{$Z_7\times Z_{19}$}\\
\hline
\end{array}
$$
\caption {Results for Theorem~\ref{theorem:2}\label{table:k}}
\end{table}
\end{center}

\end{theorem}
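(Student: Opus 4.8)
The plan is to combine the Delandtsheer--Doyen inequalities with a Camina--Mischke style analysis of the imprimitivity structure, reducing to a finite list of arithmetic possibilities that can then be checked (partly by hand, partly by computer). First I would set up the standard parameters: let $\cS$ have $v$ points, $b$ lines, replication number $r$ (number of lines on a point), and line size $k$, with $r(k-1)=v-1$ and $bk=vr$. Since $G$ is point-imprimitive, fix a $G$-invariant partition $\cC$ of $\cP$ into $d$ classes of size $c$, so $v=cd$ with $c,d\geq 2$. The Delandtsheer--Doyen equations give $v = \bigl(\frac{x}{\,?\,}\bigr)$-type identities: writing $y$ for the number of pairs of a line inside one class and $z$ for the number across classes one gets $\binom{k}{2} = y d \cdot(\text{something}) + \dots$; more usefully, their two inequalities force $c \le \binom{k}{2}/\,d'$ and $d \le \binom{k}{2}/\,c'$ for suitable $c',d'$, and in particular $cd = v \le \binom{k}{2}^2 / (\text{lower order})$, bounding $v$ polynomially in $k$. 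With $9\le k\le 12$ this already confines $v$ to a finite (though large) range.

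Next, because $\gcd(k,v)=1$, I would invoke the divisibility constraints: $k\mid b$ forces $k \mid vr$, hence $k\mid r$, so $r=k r'$ and then $v-1 = r(k-1) = k r'(k-1)$; combined with $v = cd$ this is a strong Diophantine restriction linking $c,d,r'$ to the admissible $k$. The coprimality also interacts with the structure of a line-transitive group: by a standard argument (as in \cite{CaminaMischke96}) one shows $G$ has a subgroup $H$ acting regularly on lines — a \emph{line-regular} subgroup — because $\gcd(k,v)=1$ forces the point-stabilizer and line-stabilizer orders to be coprime in a way that lets one extract such an $H$ from a Hall subgroup or a minimal line-transitive subgroup. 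I would pin down $H$ by noting $|H| = b = vr/k$, that $H$ is transitive on lines, and that its point-orbit structure is controlled by the invariant partition. The candidate groups in Table~\ref{table:k} are all metacyclic or close to it, so I expect $H$ to be forced to be soluble with a very restricted structure: a normal abelian subgroup (a product of cyclic groups of the prime orders dividing $v$) extended by a small cyclic group acting fixed-point-freely or nearly so.

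Then the bulk of the argument is case analysis on $k\in\{9,10,11,12\}$. For each $k$, the Diophantine relations above yield finitely many triples $(k,v,|H|)$; for each I would determine the possible groups $H$ of that order that can act line-regularly on a linear space with the prescribed $v$ and $k$, using transitivity constraints (the line-orbit of $H$ on pairs, the action on the invariant partition) to eliminate most of them. Surviving cases should match Lines $1$--$5$ of the table. Finally, for Lines $1,3,4,5$ I would establish that a point-stabilizer $H_\alpha$ fixes a unique point: here $|H_\alpha| = |H|/v$ is the order of the top cyclic factor (e.g. $Z_3, Z_5, Z_{13}, $ or the relevant factor), and one checks that its action on $\cP$, being semiregular off a set of fixed points whose size divides $v$ and is $\equiv v \pmod{|H_\alpha|}$, forces exactly one fixed point unless an extra fixed class appears — which the partition parameters rule out. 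The main obstacle I anticipate is not the inequalities but controlling the group structure once the parameters are fixed: showing that no \emph{other} line-regular group of the same order exists, and in particular handling the possibly many isomorphism types of groups of order $|H|$ (especially for Line~4, where $|H| = 3^3\cdot 53\cdot 13$ admits several extensions), which is where a targeted computer search — as flagged for Remark~\ref{rem1} — becomes necessary to be fully rigorous.
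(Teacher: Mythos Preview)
Your proposal has a genuine structural gap: the existence of a line-regular subgroup $H$ is \emph{not} an early consequence of $\gcd(k,v)=1$ via a Hall-subgroup or coprimality argument, and no such result appears in \cite{CaminaMischke96}. In the paper, $H$ emerges only at the very end of the analysis. The actual route is: first run the algorithms of \cite{BDLNPZ} to obtain a finite list of parameter lines (38 of them) together with the crucial facts that the partition $\cC$ is $G$-normal, minimal and maximal in every case; then analyse the normal-subgroup and centraliser structure (the subgroups $K=G_{(\cC)}$, $S=\Soc(K)$, $Y=C_G(S)$) to show that $G$ contains a point-regular \emph{normal abelian} subgroup $M=T\times S$; only after controlling $G_\alpha\le\Aut(T)\times\Aut(S)$ can one read off that $G$ (or a subgroup of it) is line-regular with the prescribed structure. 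Your plan inverts this order and so lacks the mechanism that actually forces the group to be small and of the listed shape.

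Second, your elimination step (``for each $(k,v,|H|)$ determine the possible groups $H$'') understates the difficulty and misses the key tools. The paper relies essentially on \cite[Theorem~1.6]{PraegerTuan} and \cite[Theorem~1.6]{grids} to force $K=S$ whenever the Delandtsheer--Doyen parameter $x\le 8$, and on a case-specific structural lemma (Lemma~\ref{lem:KSXY} here) involving Schur multipliers to split $M$ as $T\times S$. Several parameter lines (e.g.\ $v=441$, $k=11$) require bespoke arguments with non-abelian candidates for $\Soc(G^\cC)$ such as $L_2(8)$, $A_9$, $L_3(2)^2$, $A_7^2$, and are ruled out by Sylow and fixed-point counting, not by Diophantine constraints. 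Likewise, the parity argument that excludes many lines (forcing $|G|$ odd when both $x\le 8$ and $y\le 8$) uses that no element of $G$ can invert $M$ and that involutions must fix a class of $\cC$ or $\cC'$; this is Lemma~\ref{lem:w} in the paper and has no counterpart in your sketch. Finally, the unique-fixed-point assertion for $H_\alpha$ is not a congruence count but follows because $G_\alpha\cap\Aut(T)=G_\alpha\cap\Aut(S)=1$ makes $G_\alpha$ embed diagonally in $\Aut(T)\times\Aut(S)$, so its only fixed point on $M$ is the identity.
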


\begin{remark}\label{rem1}
\begin{enumerate}
\item[(a)] Not all of the lines of Table~{\rm\ref{table:k}} lead to examples of 
line-transitive linear spaces. However this fact cannot easily be demonstrated
theoretically and requires extensive, and exhaustive, computer searches. 
We summarise the results of those searches here.
First, in the case of Line $3$, there are no examples. This fact was 
proved by Greg Cresp {\rm\cite{Cresp}} in his undergraduate honours research project. 
Next the spaces in Line $2$ 
are projective planes, and the fact that the only projective plane admitting a 
line-transitive cyclic group of order $91$
is the Desarguesian plane $\PG_2(9)$ was proved in 
{\rm\cite[Lemma\,  {\rm4.5}]{grids}}. The main aim of {\rm \cite{grids}} 
was to complete a classification initiated in {\rm\cite[Theorem 1.6]{PraegerTuan}}.

Similar but more delicate theory and extensive computation were required to prove
that  for Line $5$, the only  projective plane
admitting a line-transitive cyclic group of order $133$ is $\PG_2(11)$, and 
in Lines $1$ and $4$ there are no examples. 
The theory and 
computational techniques involved in the latter searches are as yet unpublished, 
and will be reported in {\rm\cite{anton}}.
The results of these computer searches, together with Theorem~{\rm\ref{theorem:2}} 
and {\rm\cite[Theorem\, 1]{CaminaMischke96}}, complete the
proof of Theorem~{\rm\ref{theorem:1}}.

\item[(b)] Comparable group theoretic information to that given in Table~{\rm\ref{table:k}}
is available for groups $G$ acting line-transitively and point-imprimitively
on the linear spaces with line size $k\leq 8$ mentioned in Theorem~{\rm\ref{theorem:1}}. 
This is summarised in Table~{\rm\ref{table:groupinfo}}, 
where $F_n$ denotes a  Frobenius group of order $n$, and $P$ is the relatively free, $3$-generator, exponent $3$, nilpotency class $2$ group of order $3^6$. More details about the action of $G$ for the designs in the last line are provided in {\rm\cite[Section 2]{NNOPP}}.

\begin{center}
\begin{table}[ht]
\begin{tabular}{|c|c|c|}
\hline
$\cS$&$G$ &{Reference}\\
\hline
$\PG_2(4)$&$H\times K$, where $H=Z_3$ or $S_3$ and &\cite[Theorem 1.1]{OPP93}\\
&$K=Z_7$ or $F_{21}$&\\
$\PG_2(7)$&$H\times K$, where  $H=Z_3$ and &\cite[p. 50]{atlas}\\
&$K=Z_{19}$ or $F_{57}$&\\
Mills&$G= Z_{91}\cdot Z_d$, where $d\,|\,3$&\cite{StT}\\
Colbourn-McCalla&$G= Z_{91}\cdot Z_d$, where $d\,|\,12$&\cite{StT}\\
NNOPP&$G={\rm Aut}(\cS)=H\cdot Z_{13}$,
where&\cite[Theorem 1.2]{OPP93} and\\
& $H=Z_3^6, Z_9^3$ or $P$
& \cite[Main Theorem 1.3]{NNOPP}\\
\hline
\end{tabular}
\caption {Group information for Remark~\ref{rem1}\label{table:groupinfo}}
\end{table}
\end{center}
\end{enumerate}
\end{remark}

\medskip
One motivation for the research project reported in this paper was to exploit
and test the power of the current linear space theory, as presented in \cite{BDLNPZ}.
The objective of the paper \cite{BDLNPZ} was to collate the available theory 
of finite line-transitive,
point-imprimitive linear spaces, refine it and
develop it further, and then organise it into a series of algorithms that could be 
run on a computer. The output of the algorithms would be a (hopefully manageable)
set of feasible parameters and putative 
line-transitive, point-imprimitive groups for such linear spaces.
These algorithms were implemented, some in C
and others in the {\sf GAP} language \cite{GAP}, and  were applied
in \cite{BDLNPZ} to classify all examples for which $k/\gcd(k,v)\leq 8$. 
In particular this showed that there were no examples with $9\leq k
\leq 12$ and $\gcd(k,v)>1$. 

To
prove Theorem~\ref{theorem:2}, we used the algorithms
in \cite{BDLNPZ} to produce a list of feasible parameters and group 
theoretic information for examples  with $9\leq k
\leq 12$ and $\gcd(k,v)=1$ (see Section 3).
We then made a detailed analysis of each of these possibilities 
in Section 4. Some additional theoretical lemmas used in
Section 4 are presented in Section 2.

\section{Preliminary Results}

\subsection{Notation and hypotheses}

Let $G$ be a transitive group of permutations on a set $\cP$. 
A partition $\cC$ of $\cP$ is \emph{$G$-invariant} if, for all parts 
$C\in\cC$ and all $g\in G$, the image $C^g$ of $C$ under $g$ is 
also a part of $\cC$. For each $G$-invariant partition $\cC$ of $\cP$, 
$G$ induces a transitive permutation group $G^\cC$ on $\cC$ (called the 
\emph{top group}), and the setwise stabiliser $G_C$ of $C\in\cC$ 
induces a transitive permutation group $G^C$ on $C$ (called the 
\emph{bottom group}). Moreover, $G^C$ is independent of the choice 
of  $C$ in $\cC$ up to permutation isomorphism.
The kernel of $G$ on $\cC$ is the subgroup $G_{(\cC)}$ of elements
$g\in G$ with $C^g=C$ for each $C\in \cC$. Thus $G^{\cC}\cong
G/G_{(\cC)}$. We say that $\cC$ is {\it $G$-normal} if $G_{(\cC)}$
is transitive on each of the classes of $\cC$.

For partitions $\cC, \cC'$ of $\cP$, $\cC$ \emph{refines} $\cC'$
if every class of $\cC$ is contained in a class of $\cC'$, and this
refinement is strict if $\cC\ne\cC'$. We also say that $\cC'$ is {\em
coarser, strictly coarser} than $\cC$, respectively. Let $\cC$ 
be a $G$-invariant partition of $\cP$. Then $\cC$ is called 
{\it minimal} if the only strict $G$-invariant refinement
is the discrete partition with all
classes of size 1; and $\cC$ is {\it maximal} if the only 
$G$-invariant partition that is strictly
coarser than $\cC$ is the all-in-one partition (with a single
class). Equivalently, $\cC$ is minimal, maximal if and only if 
$G^C, G^\cC$ is primitive, respectively (where $C\in\cC$).

 Throughout the paper we assume the following
{\sc Hypotheses}. Here we denote by $\Soc(G)$ the \emph{socle} 
of a group $G$, that is, the product of the minimal normal subgroups of $G$.

\medskip
\noindent {\sc Hypotheses.} {\it \quad (a) Let $\cS=(\cP,\cL)$ be a
finite linear space with $v$ points and $b$ lines, each of
size $k$, and with $r$ lines through each point, where $2<k<v$.

(b) Assume that $\cS$ admits a line-transitive, point-imprimitive
subgroup $G$ of  automorphisms which leaves invariant a non-trivial
partition $\cC = \{C_1, \ldots , C_d\}$ of $\cP$ with $d$ classes of
size $c$ where $c>1$ and $d>1$, so that
 $v=cd$ and, by $\cite{DelandtsheerDoyen89}$,
\begin{align}\label{eqn:DD}
c = \frac{\binom{k}{2}-x}{y} \quad \text{and} \quad
d=\frac{\binom{k}{2}-y}{x}
\end{align}
with $x, y$ (called the Delandtsheer-Doyen parameters) positive
integers. \\
Let $\alpha$ denote a point in $\cP$, and let $C\in \cC$ be the class containing  $\alpha$. 

(c) Let the $\cC$-intersection type be $(1^{d_1}, \cdots, k^{d_k})$,
that is,  for each $i$, each line meets $d_i$ classes of $\cC$ 
in $i$ points, and $\sum_iid_i=k$. 

(d) If $\cC$ is $G$-normal, let $K: = G_ {(\cC)}$, $S:
=\Soc(K)$, $X: =C_G(K)$, and $Y:=C_G(S)$.}

\medskip

\subsection{Some lemmas}

First we state some results from Camina and Siemons~\cite{CaminaSiemons} and 
Camina and Praeger~\cite{CaminaPraeger93}.

\begin{lemma}\label{lem:new} Assume that the {\sc Hypotheses} hold.
\begin{enumerate}
\item[(a)] {\rm\cite[Lemma\, 4]{CaminaSiemons}}  Each involution in $G$ fixes at
least one point.
\item[(b)] {\rm\cite[Lemma\,5]{CaminaSiemons}} Suppose that $G$ has a point-regular normal
subgroup $M$. Then no element of $G$ induces the inversion map
$x\mapsto x^{-1}$ on $M$.
\item[(c)] {\rm\cite[Theorem\, 1]{CaminaPraeger93}}
If $N\trianglelefteq G$ then $N$ acts faithfully on each of its
orbits in $\cP$.
\end{enumerate}
\end{lemma}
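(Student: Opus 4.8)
All three assertions are standard results imported from the literature---(a) and (b) from Camina--Siemons and (c) from Camina--Praeger---so my plan is to reconstruct a proof of each under the {\sc Hypotheses}, using line-transitivity as the main lever.

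\smallskip\noindent\emph{Part (a).} Suppose for a contradiction that $t\in G$ is an involution fixing no point. For each $2$-cycle $\{x,x^t\}$ of $t$ the line $xx^t$ is sent to $x^tx$, hence is $t$-invariant; and $x$ lies on exactly one such line, since a $t$-invariant line through $x$ must also contain $x^t$. Thus the $t$-invariant lines partition $\cP$ into lines of size $k$, a ``parallel class'' of $v/k$ lines, forcing $k\mid v$. Moreover any line $\ell$ with $\ell\ne\ell^t$ satisfies $\ell\cap\ell^t=\emptyset$, since a common point $p$ would put both $p$ and $p^t$ in $\ell\cap\ell^t$, contradicting $|\ell\cap\ell^t|\le 1$. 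I would then derive a contradiction from this rigid configuration together with line-transitivity and point-imprimitivity. The main obstacle is showing that such a parallel class cannot coexist with a line-transitive, point-imprimitive action, and it is exactly here that imprimitivity is needed: the affine plane $\mathrm{AG}_2(4)$, which is point-primitive, admits a line-transitive group containing the fixed-point-free translations, so the statement fails without imprimitivity. In the present paper the conclusion is in any case immediate, because there $\gcd(k,v)=1$ excludes $k\mid v$.

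\smallskip\noindent\emph{Part (b).} Suppose some element induces $x\mapsto x^{-1}$ on the point-regular normal subgroup $M$, and identify $\cP$ with $M$ so that $\al\leftrightarrow 1$. Since $x\mapsto x^{-1}$ is an automorphism of $M$ only when $M$ is abelian, we may assume $M$ abelian. Part~(a) then forces $|M|$ odd, as an involution $n\in M$ would act by translation as a fixed-point-free involution on $\cP$. Next I would arrange the inverting element to be an involution $g\in G_\al$ acting as inversion: $g^2$ centralizes $M$, and for $M$ abelian regular $C_G(M)=M$ meets $G_\al$ trivially, so $g^2=1$. Then $M$ and $g$ generate a generalized dihedral group, every element of the coset $Mg$ is an involution fixing a unique point (as $|M|$ is odd), and these fixed points exhaust $\cP$. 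Counting, for one such reflection, its invariant lines through and off its fixed point against $v-1=r(k-1)$ gives divisibility constraints; the contradiction comes from combining these counts over all reflections using line-transitivity. The delicate incidence count is the main obstacle and is the substance of the Camina--Siemons argument.

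\smallskip\noindent\emph{Part (c).} Let $\Delta$ be an $N$-orbit and let $K\trianglelefteq N$ be the kernel of $N$ on $\Delta$. Since $N\trianglelefteq G$ and $G$ is point-transitive, the $N$-orbits have equal size and form a $G$-invariant partition permuted transitively by $G$; if $N$ is transitive the claim is trivial, so assume not. By line-transitivity every line meets this partition in the same pattern, and since two points in distinct orbits lie on a common line, every line meets at least two orbits. Suppose $K\ne 1$. Because $K\trianglelefteq N$, the fixed-point set $\Fix(K)$ is $N$-invariant, hence a union of $N$-orbits containing $\Delta$ and properly contained in $\cP$. I would finish by showing, via the uniform intersection pattern of lines with the orbit-partition, that a nontrivial $K$ fixing a whole orbit pointwise is forced to fix every point, contradicting the faithfulness of $N$ on $\cP$. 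Converting ``fixes an orbit pointwise'' into ``fixes everything'' is the crux, and is precisely where the Camina--Praeger counting is required.
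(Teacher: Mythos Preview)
The paper offers no proof of this lemma: all three parts are quoted from the cited references without argument. Your proposal goes further by sketching the opening moves of each part, but you too explicitly hand off the decisive counting steps to Camina--Siemons and Camina--Praeger, so both treatments are ultimately proofs by citation and there is nothing in the paper to compare your strategy against.

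One correction and one remark on your sketches. In part~(a) you write that the conclusion is immediate ``in the present paper'' because $\gcd(k,v)=1$; but that coprimality condition is a hypothesis of Theorem~\ref{theorem:2}, not of the {\sc Hypotheses} under which the lemma is stated, so the shortcut is not available at the generality of Section~2. On the other hand, your $\mathrm{AG}_2(4)$ example --- a line-transitive, point-\emph{primitive} group whose translations are fixed-point-free involutions --- is correct and confirms that some hypothesis beyond mere line-transitivity is genuinely needed for~(a). Your reductions in~(b) (forcing $M$ abelian of odd order and replacing the inverting element by an involution in $G_\alpha$) and your setup in~(c) are accurate as far as they go.
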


The next lemma extends \cite[Lemma\,8.2]{BDLNPZ}.

\begin{lemma} \label{lem:KSXY}  Assume that the {\sc Hypotheses} hold and that
$\cC$ is $G$-normal and minimal.
\begin{enumerate}
\item[(a)] Then $\cC$ is the set of $S$-orbits in $\cP$ and
\begin{enumerate}
\item[(i)] Either  $Y\cap K=1$, or $S$ is  elementary abelian and   $Y\cap
K=S$.
\item[(ii)] Either $X\cap K=1$, or $S$ is elementary abelian and $X\cap K=K=S$.
\end{enumerate}
\item[(b)] Suppose in addition that $\cC$ is maximal, that $Y\cap K=S\neq Y$, 
and that one of the following conditions holds.
$$
\begin{array}{|c|c|c|}
\hline
\mbox{Condition} & \mbox{$\Soc(G^{\cC})$}&\mbox{Extra Property}\\
\hline
1& \mbox{abelian} & \mbox{$\gcd(c,d)=1$}\\
2& \mbox{non-abelian} & \mbox{Schur multiplier of a minimal normal subgroup} \\
 &    & \mbox{of $G^\cC$ has no section
isomorphic to $S$}\\
\hline
\end{array}
$$
Then $G$ has a  normal subgroup $M=T\times S$ where $T$ is a
minimal normal subgroup of $G$ and  $T^\cC$ is 
minimal normal in $G^\cC$. Moreover either
\begin{enumerate}
\item[(i)] $T$ is non-abelian and transitive on $\cP$,  or
\item[(ii)] the set $\cC'$ of $T$-orbits in $\cP$ is a $G$-normal
partition of $\cP$ with $|\cC'|=c$ such that for $C\in \cC$ and
$C'\in \cC'$, $|C'|=d$ and $|C\cap C'|=1$. Moreover either $M$ is
regular on $\cP$ or $T$ is not semiregular on $\cP$.
\end{enumerate}

\end{enumerate}
\end{lemma}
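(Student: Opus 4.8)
The plan is to first dispose of part~(a), which is largely structural, and then build up part~(b) by analysing the normal subgroup structure of $G$ relative to the kernel $K=G_{(\cC)}$. For (a), since $\cC$ is $G$-normal, $K$ is transitive on each class $C$, so the $K$-orbits in $\cP$ are exactly the classes of $\cC$; since $S=\Soc(K)\trianglelefteq G$ is nontrivial and normal in the transitive-on-$C$ group $G^C$, and $\cC$ is minimal (so $G^C$ is primitive), $S$ acts nontrivially, in fact transitively, on each class, whence $\cC$ is also the set of $S$-orbits. For (a)(i): $Y=C_G(S)$ normalises each $S$-orbit, so $Y\cap K$ centralises $S$ while lying in $K$; as $S=\Soc(K)$ contains its own centraliser in any group in which it is the socle only when... — more carefully, $C_K(S)\trianglelefteq K$, and $C_K(S)\cap S=Z(S)$, so if $S$ is non-abelian then $Z(S)=1$ and a standard socle argument forces $C_K(S)=1$, i.e. $Y\cap K=1$; if $S$ is abelian it is elementary abelian (being a socle) and one checks $C_K(S)=S$, giving $Y\cap K=S$. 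Part (a)(ii) follows the same way with $X=C_G(K)\le Y$ (since $S\le K$), noting that $X\cap K=C_K(K)=Z(K)$, and when this is nontrivial the minimality/socle constraints force $K=S$ elementary abelian.

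For part~(b), assume $Y\cap K=S\ne Y$, so by (a)(i), $S$ is elementary abelian, say an elementary abelian $p$-group, and $Y/S\cong Y/(Y\cap K)$ embeds in $G/K\cong G^\cC$ as a nontrivial normal subgroup (it is normal because $Y=C_G(S)\trianglelefteq G$). The strategy is to locate inside $Y$ a complement to $S$ that is normal in $G$. Pick a minimal normal subgroup $\overline T$ of $G^\cC$ contained in the image $\overline Y$ of $Y$, and let $Y_1$ be its full preimage in $Y$, so $Y_1\trianglelefteq G$ and $Y_1/S\cong \overline T$. Now split into the two tabulated cases. In Condition~1, $\overline T=\Soc(G^\cC)$ (or a minimal normal subgroup of it) is abelian, hence $Y_1$ is a group with normal subgroup $S$ and abelian quotient; one wants to split $Y_1=T\times S$. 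The coprimality hypothesis $\gcd(c,d)=1$ is what makes this work: $|S|$ divides $c$ (as $S$ is regular on each class of size $c$) while $|\overline T|$ divides $d=|\cC|$... — actually $|\overline T|$ divides $|G^\cC|$ but the relevant point is that $T^\cC=\overline T$ acts on $\cC$ and the coprimality separates the $p$-part of $S$ from the order of $\overline T$, so by Schur–Zassenhaus (or a direct Sylow argument, since $S$ is a normal Sylow $p$-subgroup of $Y_1$ once one knows $p\nmid|\overline T|$) $Y_1$ splits as $T\ltimes S$, and abelianness of $\overline T$ together with $S=C_{Y_1}(S)$... forces the action to be trivial, giving the direct product $T\times S$. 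In Condition~2, $\overline T$ is non-abelian; the hypothesis that the Schur multiplier of a minimal normal subgroup of $G^\cC$ has no section isomorphic to $S$ is precisely the obstruction-vanishing condition that lets us lift the perfect central extension: $Y_1$ is a central extension of $\overline T$ by $S$, and the hypothesis kills any nonsplit part, so again $Y_1=T\times S$ with $T\cong\overline T$ non-abelian and $T\trianglelefteq G$, $T^\cC$ minimal normal in $G^\cC$.

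Once $M=T\times S\trianglelefteq G$ is in hand, the final step is to read off the dichotomy (i)/(ii) from the action of $T$ on $\cP$. Since $T\cap S\le T\cap K$ and $T$ maps isomorphically onto $T^\cC$, we have $T\cap K=1$, so $T$ acts faithfully and nontrivially on $\cC$; by Lemma~\ref{lem:new}(c), $T$ acts faithfully on each of its orbits in $\cP$. If $T$ is non-abelian, I will argue $T$ is transitive on $\cP$: if not, its orbit set $\cC'$ is a nontrivial $G$-invariant partition, and one uses maximality of $\cC$ together with the structure $M=T\times S$ to derive a contradiction to the non-abelian/minimality setup — this is case~(i). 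If $T$ is abelian (only possible under Condition~1), then $|T|=|T^\cC|$ divides $d$; the $T$-orbits form a $G$-normal partition $\cC'$, and since $S$ has orbits of size $c$ coprime to $|T|$, a counting/coprimality argument shows each $T$-orbit has size $d$ and meets each $C\in\cC$ in exactly one point, so $v=cd=|C'|\cdot|\cC'|$ forces $|\cC'|=c$; the regularity of $M=T\times S$ on $\cP$ versus non-semiregularity of $T$ is a clean case split on whether $T$ itself is regular on $\cP$ (equivalently $|T|=d$). The main obstacle is Condition~2: making rigorous the claim that the stated Schur-multiplier hypothesis is exactly what is needed to split the central extension $1\to S\to Y_1\to \overline T\to 1$ when $\overline T$ is a (possibly non-simple) minimal normal subgroup — this requires identifying $\overline T\cong T_0^m$ for a simple $T_0$ and controlling the multiplier of the power, and then checking that the splitting can be chosen $G$-invariantly so that $T\trianglelefteq G$ rather than merely $T\trianglelefteq Y_1$; I expect this to be where most of the real work lies, with the coprime case and the orbit analysis being comparatively routine.
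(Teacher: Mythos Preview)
Your overall strategy is sound, and you have correctly identified that the key object is the central extension $1 \to S \to M \to M^{\cC} \to 1$ with $S \leq Z(M)$ (since $M \leq Y = C_G(S)$). However, two points need correction.

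\textbf{Obtaining a $G$-normal complement.} You flag the $G$-invariance of the splitting in Condition~2 as the main obstacle, and you are right that a bare cohomological splitting need not produce a $G$-normal complement. The paper resolves this with a one-line trick: take $T := M'$. Since $M^{\cC}$ is a non-abelian minimal normal subgroup of the primitive group $G^{\cC}$, it is perfect, so $T^{\cC} = (M')^{\cC} = (M^{\cC})' = M^{\cC}$. Now $T \cap S \leq M' \cap Z(M)$, and since $M$ is a central extension of the perfect group $M^{\cC}$, the group $M' \cap S$ is a quotient of the Schur multiplier of $M^{\cC}$. One also needs that $S$ is a \emph{minimal} normal subgroup of $G$ (which you should establish explicitly: $S \cong S^C$ is the regular elementary abelian socle of the primitive affine group $G^C$, hence minimal normal there, and faithfulness transfers this to $G$); then $T \cap S \in \{1, S\}$, and the Schur-multiplier hypothesis rules out $T \cap S = S$. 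Hence $M = T \times S$, and since $M'$ is characteristic in $M \trianglelefteq G$, we get $T \trianglelefteq G$ automatically. The same philosophy handles Condition~1: with $d = p^a$ and $\gcd(c,d) = 1$, take $T$ to be the Sylow $p$-subgroup of $M$; since $S \leq Z(M)$ and $M/S$ is abelian, $M$ is nilpotent, so $T$ is unique and therefore characteristic in $M$.

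\textbf{Orbit analysis.} Your proposed dichotomy is miscast. You plan to show that non-abelian $T$ forces transitivity on $\cP$, but this is neither true in general nor what the lemma asserts: in Condition~2, case (b)(ii) may perfectly well hold with $T$ non-abelian and intransitive. The correct argument uses \emph{minimality} of $\cC$ (primitivity of $G^C$), not maximality. Since $T^{\cC}$ is transitive, each $T$-orbit $D$ meets every class $C \in \cC$ in a constant number $c_0$ of points, and $D \cap C$ is a block of imprimitivity for $G^C$; primitivity then gives $c_0 \in \{1, c\}$, so $T$-orbits have size $d$ or $v$. Size $v$ gives case (i) (and here $T$ is necessarily non-abelian, since in Condition~1 one has $|T| = d < v$); size $d$ gives the grid partition $\cC'$ of case (ii), and then $M$ is regular on $\cP$ precisely when $T$ is semiregular, both being equivalent to $|T| = d$.
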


\begin{proof} Part (a)  is proved in \cite [Lemma\,8.2]
{BDLNPZ}. For part (b), since $Y\cap K=S$, it follows from part
(a)(i) that $S$ is abelian, and since $\cC$ is minimal the bottom group $G^C$ is primitive. 
By Lemma~\ref{lem:new}(c),
$S^C\cong S$, and since $S^C$ is normal in $G^C$, it follows that 
$S^C$ is regular and is a minimal
normal subgroup of $G^C$. Hence $S$ is a minimal normal subgroup
of $G$. By assumption $Y\nleq K$ and hence $Y^\cC$ is a non-trivial
normal subgroup of $G^\cC$, and since $\cC$ is maximal, $G^\cC$ is primitive. 
Thus there exists a
subgroup $M\vartriangleleft G$ such that $S<M\leq Y$ and $M^\cC\cong
M/S$ is a minimal normal subgroup of $G^\cC$. Suppose first that
condition 1 holds. Then $\Soc(G^\cC)=M^\cC$, $d=p^a$ for some prime
$p$, and a Sylow $p$-subgroup $T$ of $M$ is $Z_p^a$ (since
$\gcd(c,d)=1$). Since $T\leq C_G(S)$, $T$ is 
characteristic in $M$, and hence $T\vartriangleleft G$, and
$M=T\times S$ is regular on $\cP$. By~\cite[Lemma 8.2]{BDLNPZ}, the $T$-orbits form 
a $G$-normal partition of $\cP$ with the properties of (b)(ii).
Since $T^\cC$ is minimal normal in $G^\cC$, it follows that
$T\cong T^\cC$ is minimal normal in $G$, so b(ii) holds.

Now suppose that condition 2 holds. Let $T=M'$, the
derived subgroup of $M$. Since $M^\cC=M/S$
 is a minimal normal subgroup of $G^\cC$ and  $\Soc(G^\cC)$ is non-abelian,
$M/S$ has no non-trivial abelian quotient, and so $T^\cC=M^\cC$. Also, since 
$M\leq Y=C_G(S)$, we have $T\cap S\leq M'\cap Z(M)$, that is, $T\cap S$ 
is contained in the Schur multiplier of $T^\cC$. Moreover, since $S$ is 
minimal normal in $G$ either $T\cap S=1$ or $T\cap S=S$. By condition 2, 
the second possibility cannot occur. So $T\cap S=1$, and $M=T\times S$
with $T$ minimal normal in $G$.

Since $T^\cC$ is transitive, each $T$-orbit $D$ in $\cP$ meets each
$\cC$-class in a constant number of points, say $c_0$, and for
$C\in\cC$, $D\cap C$ is a block of imprimitivity for $G$, and hence
also for $G^C$, of size $c_0$. Since $G^C$ is primitive, $c_0=1$ or
$c$. Thus $T$-orbits in $\cP$ have length $d$ or $v$ respectively.
In the latter case $T$ is transitive, and b(i)
holds. In the former case, by~\cite[Lemma 8.2]{BDLNPZ}, the set of $T$-orbits forms 
a $G$-normal partition of $\cP$ with the properties of (b)(ii). Finally, 
either $M$ is regular on $\cP$ or $T$ is not
semiregular on $\cP$.  
\hfill$\square$
\end{proof}

\begin{lemma}\label{lem:partition} Suppose that the {\sc Hypotheses} hold, 
and also that part (b)(ii) of Lemma~{\rm\ref{lem:KSXY}} holds with $M$ acting
regularly on $\cP$. 
Then we may identify $\cP$ with $M=\{(t,s)|\, t\in T, s\in S\}$ 
such that, for $\alpha=(1_T,1_S)$, $G\leq \hat{G}: =M.({\Aut} (T)\times \Aut (S))$ and $G_\alpha\leq \hat{G}_\alpha=\Aut(T)\times \Aut(S)$ acting naturally on $M$. Moreover,
\begin{align}
\cC&=\{C_t\,|\,t\in T\}\quad \mbox{where}\quad C_t=\{(t,s)\,|\,
s\in S\}\quad \mbox{for $t\in T$}\ \label{eqn:C}\\
\cC'&=\{C_s'\,|\,s\in S\}\quad \mbox{where}\quad C_s'=\{(t,s)\,|\,t\in
T\}\quad \mbox{for $s\in S$}\ \label{eqn:C'}
\end{align}
$K=S.(G_\alpha\cap\Aut(S))$ and the kernel $L$ of the $G$-action 
on $\cC'$ is $L=T.(G_\alpha\cap\Aut(T))$.
\end{lemma}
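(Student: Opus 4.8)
The plan is to use the standard identification of $\cP$ with the regular normal subgroup $M=T\times S$. First I would define the map $\cP\to M$, $\alpha^m\mapsto m$; since $M$ acts regularly on $\cP$ this is a well-defined bijection, and I use it to identify $\cP$ with $M$, so that $\alpha=1_M$ and $M$ acts on $\cP=M$ by right translation. Since $M\trianglelefteq G$ is transitive and $M\cap G_\alpha=1$, we get $G=M\rtimes G_\alpha$; and a direct computation with this identification shows that each $g\in G_\alpha$ acts on $M$ by conjugation, $m\mapsto g^{-1}mg$, and that this conjugation action is faithful (it coincides with the faithful permutation action of $G_\alpha$ on $\cP=M$).

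Next I would produce the overgroup $\hat G$. Both $T$ and $S$ are normal in $G$ — $T$ because it is minimal normal, and $S=\Soc(K)$ because $K\trianglelefteq G$ — so the conjugation action of any $g\in G_\alpha$ stabilises each of $T$ and $S$. Restricting this action to the two factors gives a homomorphism $G_\alpha\to\Aut(T)\times\Aut(S)$, which is injective since $M=TS$ and the conjugation action of $G_\alpha$ on $M$ is faithful. Hence $G=M\rtimes G_\alpha\le M.(\Aut(T)\times\Aut(S))=\hat G$, with $G_\alpha\le\hat G_\alpha=\Aut(T)\times\Aut(S)$ acting componentwise on $M=T\times S$, as claimed.

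Then I would read off the two partitions. By Lemma~\ref{lem:KSXY}(a), $\cC$ is the set of $S$-orbits on $\cP$; under the identification the $S$-orbit of a point $m$ is the coset $mS$, and writing $m=(t,s)$ in $M=T\times S$ gives $mS=\{(t,s')\mid s'\in S\}=C_t$, so $\cC=\{C_t\mid t\in T\}$ as in \eqref{eqn:C}. Symmetrically, the $T$-orbits are the cosets $mT=\{(t',s)\mid t'\in T\}=C'_s$, giving $\cC'=\{C'_s\mid s\in S\}$ as in \eqref{eqn:C'}; note $C_t\cap C'_s=\{(t,s)\}$, consistent with part (b)(ii) of Lemma~\ref{lem:KSXY}.

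Finally I would compute the two kernels. Writing a general element of $G$ as $g=(t_0,s_0)h$ with $(t_0,s_0)\in M$ and $h\in G_\alpha$ corresponding to $(h_T,h_S)\in\Aut(T)\times\Aut(S)$, a one-line calculation in $M=T\times S$ gives $C_t^{\,g}=C_{(tt_0)^{h_T}}$. Hence $g$ fixes every class of $\cC$ exactly when $t_0=1_T$ and $h_T=1$, i.e. $g\in S.(G_\alpha\cap\Aut(S))$, where $G_\alpha\cap\Aut(S)=\ker(G_\alpha\to\Aut(T))$; this yields $K=G_{(\cC)}=S.(G_\alpha\cap\Aut(S))$, and the symmetric computation, using $(C'_s)^{\,g}=C'_{(ss_0)^{h_S}}$, gives $L=T.(G_\alpha\cap\Aut(T))$. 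I do not expect any of these steps to be a real obstacle: the argument is essentially routine holomorph bookkeeping. The one point needing care is keeping the left/right multiplication conventions consistent while matching the abstract group $S=\Soc(K)$ with the subgroup $\{1_T\}\times S$ of $M$ — and this matching is exactly what is guaranteed by the fact, from Lemma~\ref{lem:KSXY}(a), that $\cC$ is the set of $S$-orbits.
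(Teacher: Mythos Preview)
Your proposal is correct and follows essentially the same route as the paper: identify $\cP$ with the regular normal subgroup $M$ so that $G_\alpha\le\Aut(M)$, use the normality of $T$ and $S$ in $G$ to place $G_\alpha$ inside $N_{\Aut(M)}(T)\cap N_{\Aut(M)}(S)\cong\Aut(T)\times\Aut(S)$, and then read off $\cC$, $\cC'$ as the $S$- and $T$-orbit partitions. Your explicit computation of $K$ and $L$ via $C_t^{\,g}=C_{(tt_0)^{h_T}}$ spells out what the paper simply asserts in its last sentence, but the underlying argument is the same holomorph bookkeeping.
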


\begin{proof}
Suppose that $M=T\times S$ is regular on $\cP$, and set $X: =M.\Aut(M)$.
Then we can identify $M$ with $\cP$ such that $G\leq X\leq \Sym(\cP)$, 
and for $\alpha=(1_T,1_S)$, $G=MG_\alpha$ with $G_\alpha\leq X_\alpha=
\Aut(M)$ acting naturally on $M$. Moreover, since $T,
S\vartriangleleft G$ we have $G_\alpha\leq N:=N_{\Aut(M)}(T)\cap
N_{\Aut(M)}(S)$. Now each $\sigma\in N$ induces $\sigma^T\in \Aut
(T)$ and $\sigma^S\in \Aut (S)$ and we have a natural homomorphism
$\varphi : N\rightarrow \Aut(T)\times \Aut(S)$ defined by
$\varphi(\sigma)=(\sigma^T,\sigma^S)$. In fact $\varphi$ is an
isomorphism so we may identify $N$ with $\Aut(T)\times \Aut(S)$.
Setting $\hat{G}=M.N$ we have $\hat{G}_\alpha=N$. Finally, 
since $\cC, \cC'$ are the sets of $S$-orbits and $T$-orbits in
$\cP$, respectively, (2) and (3) hold, and we have $K=S.(G_\alpha\cap\Aut(S))$ and 
$L=T.(G_\alpha\cap\Aut(T))$. \hfill$\square$
\end{proof}

The last lemma of the section addresses a special case of Lemma~\ref{lem:KSXY}~(b)(ii) 
that arises frequently in our search.

\begin{lemma}\label{lem:w} Suppose that the {\sc Hypotheses} hold, 
and also that part (b)(ii) of Lemma~{\rm\ref{lem:KSXY}} holds with $M$ 
regular on $\cP$ and $c$ and $d$ distinct odd primes.
Then $T: =\langle t\rangle\cong Z_d$, $S: =
\langle s\rangle\cong Z_c$, $\Aut (T)= \langle g\rangle\cong
Z_{d-1}$, $\Aut(S)=\langle h\rangle\cong Z_{c-1}$, $G_\alpha\leq
\langle g\rangle \times \langle h\rangle$. Let
$g'=g^{\frac{d-1}{2}}$, $h'= h^{\frac{c-1}{2}}$. Then we may 
re-label the classes of $\cC$ and $\cC'$ so that
\begin{align}
\cC&=\{C_i\, |\, 0\leq i\leq d-1\}\quad{\mbox where}\quad C_i=\{(t^i,
s^j)\, |\, 0\leq j\leq c-1\} \\
\cC'&=\{C_j'\, |\, 0\leq j\leq c-1\}\quad{\mbox where}\quad
C_j'=\{(t^i, s^j)\, |\, 0\leq i\leq d-1 \}
\end{align}
and in particular $C_0=S$ and $C_0'=T$. Moreover, 
\begin{enumerate}
\item[(a)] 
\begin{enumerate}
\item[(i)]
 $\Fix_\cP(g')=C_M(g')=S\in \cC$, $\Fix_\cP(h')=C_M(h')=T\in \cC'$.
 \item[(ii)] $g'h'\not\in G_\alpha$, and $G_\alpha$ contains at most one
of $g'$, $h'$. In particular a Sylow $2$-subgroup of $G$ is cyclic.
 \end{enumerate}
 \item[(b)] If $|G|$ is even then one of (i)--(iv) below holds.
\begin{enumerate}
 \item [(i)] $x\leq 8<y$ and $g'\in G_\alpha$, $\Fix_\cP(g')=S\in
 \cC$, 
 \item [(ii)] $y\leq 8<x$ and $h'\in G_\alpha$, $\Fix_\cP(h')=T\in
 \cC'$, 
 \item[(iii)] $x\geq 9$ and $y\geq 9$,
 \item[(iv)]  $x\leq 8$ and $y\leq 8$, and $\cS$ is $\PG_2(4)$, $\PG_2(9)$, $\PG_2(11)$, 
or the Colbourn-McCalla design.
\end{enumerate}
\end{enumerate}
\end{lemma}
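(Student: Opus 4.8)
The plan is to combine Lemma~\ref{lem:partition} with elementary facts about automorphism groups of cyclic groups of prime order, and then to feed the case $x\le 8$, $y\le 8$ into the earlier classifications. First I would invoke Lemma~\ref{lem:partition}: since part (b)(ii) of Lemma~\ref{lem:KSXY} holds with $M=T\times S$ regular on $\cP$, and since $c,d$ are distinct odd primes, we have $T\cong Z_d$, $S\cong Z_c$, so $\Aut(T)\cong Z_{d-1}$ and $\Aut(S)\cong Z_{c-1}$ are cyclic, and $G_\alpha\le \hat G_\alpha=\Aut(T)\times\Aut(S)=\langle g\rangle\times\langle h\rangle$. Identifying $\cP$ with $M$ and using~\eqref{eqn:C}, \eqref{eqn:C'}, the displayed descriptions of $\cC$ and $\cC'$ (in particular $C_0=S$, $C_0'=T$) are immediate after relabelling $t\mapsto t^i$, $s\mapsto s^j$ to be the natural coordinates. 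The elements $g'=g^{(d-1)/2}$ and $h'=h^{(c-1)/2}$ are the unique involutions in $\Aut(T)$ and $\Aut(S)$ respectively; $g'$ is the inversion map on $T$ and $h'$ is the inversion map on $S$.

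For part (a)(i): $g'$ inverts $T$ and centralises $S$, so its fixed-point set on $M=T\times S$ is $\{(t,s): t^{-1}=t\}=\{1_T\}\times S=S\in\cC$, which also equals $C_M(g')$; symmetrically $\Fix_\cP(h')=C_M(h')=T\in\cC'$. For part (a)(ii): the element $g'h'$ inverts both $T$ and $S$, hence inverts $M$ pointwise, i.e.\ induces the inversion map $x\mapsto x^{-1}$ on the point-regular normal subgroup $M$; by Lemma~\ref{lem:new}(b) this forces $g'h'\notin G$, in particular $g'h'\notin G_\alpha$. Since $g',h'$ are the only involutions in $\langle g\rangle$, $\langle h\rangle$ respectively, any involution in $G_\alpha\le\langle g\rangle\times\langle h\rangle$ is one of $g'$, $h'$, $g'h'$; as $g'h'$ is excluded, $G_\alpha$ contains at most one involution, so a Sylow $2$-subgroup of $G_\alpha$ — and hence, since $G=MG_\alpha$ with $|M|$ odd, of $G$ — is cyclic.

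For part (b), assume $|G|$ is even, so $G_\alpha$ has even order and therefore contains an involution, which by (a)(ii) is exactly one of $g'$, $h'$. Say $g'\in G_\alpha$ (the case $h'\in G_\alpha$ is symmetric with the roles of $c,d$ and $x,y$ interchanged, using $d=(\binom k2-y)/x$ and $c=(\binom k2-x)/y$). Then $g'$ fixes the point $\alpha$ and $\Fix_\cP(g')=S\in\cC$ by (a)(i); in particular $g'$ fixes every point of the class $C=S$ and no others. The key point is to bound $|\Fix_\cP(g')|=c$ from below using the line-transitivity. Recall the Delandtsheer--Doyen identity $d=(\binom k2-y)/x$: since an involution fixing a whole class of size $c$ must, by a standard counting argument on the lines through a fixed point and the induced action on $\cC$, satisfy a constraint that translates into one of $x,y$ being small — I would argue that $g'$ fixing the block $C$ setwise acts on the $d$ classes with $\Fix_\cC(g')$ small, and combined with $g'$ being an automorphism of $\cS$ this forces $y>8\ge x$ unless one lands in the exceptional configurations. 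Concretely, the dichotomy $x\le 8$ versus $x\ge 9$ (and symmetrically for $y$) gives cases (i)--(iii), while in the remaining case $x\le 8$ and $y\le 8$ one has $k/\gcd(k,v)\le k\le 12$ and $\gcd(k,v)=1$, so $\cS$ is covered by \cite[Theorem 1]{CaminaMischke96} together with the classification in \cite{BDLNPZ}; checking which of those examples admits the required regular $M=Z_d\times Z_c$ with an involution automorphism fixing a class pointwise leaves precisely $\PG_2(4)$, $\PG_2(9)$, $\PG_2(11)$ and the Colbourn--McCalla design, giving case (iv).

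The main obstacle I anticipate is pinning down the exact inequality in case (b)(i)--(ii): showing that $g'\in G_\alpha$ forces $x\le 8<y$ rather than merely $x\le y$ or some weaker bound. This requires carefully exploiting the structure of lines relative to the partition $\cC$ — in particular the $\cC$-intersection type $(1^{d_1},\dots,k^{d_k})$ from Hypothesis~(c) and how $g'$, fixing $S$ pointwise and acting semiregularly (fixed-point-freely) on $\cP\setminus S$, permutes the lines through $\alpha$. I would expect to need a counting argument comparing $r$ (lines through $\alpha$), the lines contained in or meeting $S$, and the orbit lengths of $\langle g'\rangle$ on the line set, and to extract from $\binom k2 = xd+y = yc+x$ that the "small" Delandtsheer--Doyen parameter on the side where the fixed class sits must be at most $8$; the bound $8$ itself presumably comes from $k\le 12$ via $\binom{12}{2}$-type estimates. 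Everything else is routine group theory or a finite table-lookup against the $k\le 8$ classification.
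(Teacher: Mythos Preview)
Your treatment of the preliminaries and of part~(a) is correct and essentially identical to the paper's: Lemma~\ref{lem:partition} gives the identification of $\cP$ with $M$ and the descriptions of $\cC,\cC'$; the fixed-point computations for $g',h'$ are immediate; and Lemma~\ref{lem:new}(b) excludes $g'h'$, whence a Sylow $2$-subgroup of $G_\alpha$ (and of $G$, since $|M|$ is odd) is cyclic.

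Part~(b), however, has a genuine gap. The decisive observation you miss is that the unique involution $w\in G_\alpha$ lies in one of the two kernels: if $w=h'$ then $h'$ centralises $T$, hence fixes every $\cC$-class setwise, so $h'\in K=G_{(\cC)}$ and $K$ is \emph{not semiregular} on $\cP$; symmetrically, if $w=g'$ then $g'\in L=G_{(\cC')}$ and $L$ is not semiregular. The paper then invokes \cite[Theorem~1.6]{PraegerTuan} together with \cite[Theorem~1.6]{grids}, which assert (in effect) that if the kernel on a $G$-normal partition is not semiregular, then either the corresponding Delandtsheer--Doyen parameter exceeds $8$, or $\cS$ is one of the four designs listed in~(b)(iv). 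Applying this to $\cC$ (parameter $x$) when $w=h'$, and to $\cC'$ (parameter $y$) when $w=g'$, yields the trichotomy (i)--(iv) directly.

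Your proposed route --- a direct counting argument on lines through $\alpha$ and orbit lengths of $\langle w\rangle$ --- is not carried out, and the specific threshold $8$ does \emph{not} arise from any estimate involving $k\le 12$: the lemma makes no such assumption on $k$, and the bound $8$ is an artefact of the external classifications just cited. Moreover, your appeal to \cite{CaminaMischke96} and \cite{BDLNPZ} for case~(iv) is inapplicable: those results require $k\le 8$ or $k/\gcd(k,v)\le 8$, whereas $x\le 8$ and $y\le 8$ impose no bound on $k$ in general (since $\binom{k}{2}=xd+y$ with $d$ potentially large). The correct references are \cite{PraegerTuan} and \cite{grids}, and the correct mechanism is the non-semiregularity of the relevant kernel.
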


\begin{proof}
The first assertions, as well as (4) and (5), follow from 
Lemma~\ref{lem:partition}. Moreover, as $\hat{G}$ acts naturally on $\cP=M$,
part (a)(i) holds. Since $g'h'$ acts as the inversion map on $M$, 
Lemma~\ref{lem:new}~(b) implies that $g'h'\not\in  G_\alpha$. Thus 
$G_\alpha$ contains at most one of $g'$ and $h'$. Then, since 
$|G:G_\alpha|=cd$ is odd and  
$G_\alpha\leq \langle g\rangle \times \langle h\rangle$,
it follows that $G_\alpha$ contains a Sylow 2-subgroup of $G$ and a
Sylow 2-subgroup is cyclic.

Now suppose that $|G|$ is even. By the previous paragraph, it follows
that  $G_\alpha$ contains exactly one involution $w$, and that $w$
is $g'$ or $h'$. Let $F=\Fix_\cP(w)$. By (a)(i), $F$ is $S$ or $T$ 
respectively. If both $x>8$ and $y>8$ then (b)(iii) holds, so assume that
at least one of $x,y$ is at most 8. If $w=h'$ then $K$ contains
$S.\langle h'\rangle$ and so $K$ is not semiregular on $\cP$. 
Then, by \cite[Theorem 1.6]{PraegerTuan} and \cite[Theorem
1.6]{grids}, either $x>8$ (in which case we must have $y\leq 8$ 
and (b)(ii) holds), or $x\leq8$ and $\cS$ is as in (b)(iv) and for these
linear spaces $y\leq8$ also. Similarly if $w=g'$ then $L$ contains
$T.\langle g'\rangle$ and so $L$ is not semiregular on $\cP$. Again, either
$y>8$ (in which case we must have $x\leq 8$ 
and (b)(i) holds), or $x, y\leq8$ and $\cS$ is as in (b)(iv).\hfill$\square$
\end{proof}

\section{Running the algorithms}

Assume that the {\sc Hypotheses} hold with $9\leq k\leq 12$ and $\gcd(k,v)=1$.
As explained in \cite[Section 3]{BDLNPZ}, to find all pairs $(\cS,G)$ we
may assume that either $G$ is quasiprimitive on $\cP$ (that is to say, all
non-trivial normal subgroups of $G$ are transitive on $\cP$), or $\cC$ is 
$G$-normal. The search procedures in \cite{BDLNPZ} return parameter
and group information for all possibilities $(\cS,G)$ with $k/\gcd(k,v)$
at most a given maximum value $k_{max}^{(r)}$. 
We applied Algorithms 1, 6, 7 and 8 from \cite{BDLNPZ} assuming that
$9\leq k\leq k_{max}^{(r)}=12$ and $\gcd(k,v)=1$. We are
grateful to Dr. Maska Law for performing these computer tests for us. 
The only possibilities for the parameters $v=c\cdot d$, $k, x, y$ and the
intersection type were those given in Table~\ref{Table-fangli}. 
Other parameter information returned by these
algorithms is not shown as it is not needed for our further analysis
in Section 4. The algorithms also returned the group theoretic information
given in Table~\ref{Table-group} about the top group $G^\cC$ and the 
bottom group $G^C$. In particular the computer tests
proved the following:

\begin{enumerate}
\item[(1)] In all {\sc Lines} of Table~\ref{Table-fangli} the partition $\cC$ is $G$-normal (that is, there were no possiblities with $G$ quasiprimitive).
\item[(2)] In all {\sc Lines} of Table~\ref{Table-fangli}, 
$\cC$ is maximal and minimal, so both $G^\cC$ and $G^C$ are primitive.
\end{enumerate}


\begin{center}
\begin{table}[ht]\label{T3}
$$
\begin{array}{|clccccc|}
\hline \mbox{{\sc Line} }& \mbox{$v=d \cdot c$} & \mbox{$x$} &
\mbox{$y$} & \mbox{$k$} & \mbox{$b/v$} & \mbox{{\sc Inter-type}}
\\
\hline\hline
 1 & \mbox{$145= 5 \cdot 29 $} &  $7$& $1$ & $9$ & $2$ & \mbox{$ (1, 2,
 3^{2})$}
\\
2&\mbox{$145= 5 \cdot  29 $} &  $7$& $1$& $9$ & $2$ & \mbox{$
(1^{3}, 2, 4)$}
\\
3 & \mbox{$145= 29 \cdot  5 $} &  $1$& $7$ & $9$ & $2$ & \mbox{$
(1^{7}, 2)$}
\\
4 & \mbox{$217= 7 \cdot  31 $} &  $5$& $1$& $9$ & $3$ & \mbox{$
(1^{2}, 2^{2}, 3)$}
\\
5 & \mbox{$217= 31 \cdot  7 $} &  $1$& $5$& $9$ & $3$ & \mbox{$
(1^{7}, 2)$}
\\
6 & \mbox{$289= 17 \cdot  17 $} &  $2$& $2$ & $9$ & $4$ & \mbox{$
(1^{5}, 2^{2})$}
\\
\hline\hline
 7& \mbox{$91= 7 \cdot  13 $} &  $6$& $3$ & $10$ & $1$  & \mbox{$ (1, 2^{3}, 3)$}
\\
8 &\mbox{$91= 7 \cdot  13 $} &  $6$& $3$  & $10$ & $1$ & \mbox{$
(1^{4}, 3^{2})$}
\\
9 &\mbox{$91= 7 \cdot  13 $} &  $6$& $3$ & $10$ & $1$ & \mbox{$
(1^{6},4)$}
\\
10& \mbox{$91= 13 \cdot  7 $} &  $3$& $6$  & $10$ & $1$ & \mbox{$
(1^{4}, 2^{3})$}
\\
11&\mbox{$91= 13 \cdot  7 $ }&  $3$& $6$ & $10$ & $1$ & \mbox{$
(1^{7}, 3)$}
\\
12 & \mbox{$451= 11 \cdot  41 $} &  $4$& $1$ & $10$ & $5$  & \mbox{$
(1^{2}, 2^{4})$}
\\
13 &\mbox{$451= 11 \cdot  41 $} &  $4$& $1$ & $10$ & $5$ & \mbox{$
(1^{5}, 2, 3)$}
\\
14 & \mbox{$451= 41 \cdot  11 $} &  $1$& $4$ & $10$ & $5$  & \mbox{$
(1^{8}, 2)$}
\\
\hline\hline
 15 & \mbox{$111= 37 \cdot  3 $} &  $1$& $18$ &$11$ & $1$  & \mbox{$ (1^{9}, 2)$}
\\
16 & \mbox{$221= 13 \cdot  17 $} &  $4$& $3$& $11$ & $2$ & \mbox{$
(1^{3}, 2^{4})$}
\\
17 &\mbox{$221= 13 \cdot  17 $} &  $4$& $3$& $11$ & $2$ & \mbox{$
(1^{6}, 2, 3)$}
\\
18 & \mbox{$221= 17 \cdot  13 $} &  $3$& $4$& $11$ & $2$& \mbox{$
(1^{5}, 2^{3})$}
\\
19 &\mbox{$221= 17 \cdot  13 $} &  $3$& $4$& $11$ & $2$& \mbox{$
(1^{8}, 3)$}
\\
20 & \mbox{$441= 9 \cdot  49 $} &  $6$& $1$ & $11$ & $4$& \mbox{$
(1^{2}, 2^{3}, 3)$}
\\
21&\mbox{$441= 9 \cdot  49 $ }&  $6$& $1$& $11$ & $4$ & \mbox{$
(1^{5}, 3^{2})$}
\\
22&\mbox{$441= 9 \cdot  49 $} &  $6$& $1$& $11$ & $4$& \mbox{$
(1^{7}, 4)$}
\\
23 & \mbox{$441= 49 \cdot  9 $} &  $1$& $6$& $11$ & $4$& \mbox{$
(1^{9}, 2)$}
\\
24& \mbox{$1431= 27 \cdot  53 $} &  $2$& $1$& $11$ & $13$ & \mbox{$
(1^{7}, 2^{2})$}
\\
25 &\mbox{$1431= 53 \cdot  27 $} &  $1$& $2$ & $11$ & $13$& \mbox{$
(1^{9}, 2)$}
\\
\hline\hline
26 & \mbox{$133= 7 \cdot  19 $} &  $9$& $3$& $12$ & $1$ & \mbox{$
(2^{3}, 3^{2})$}
\\
27&\mbox{$133= 7 \cdot  19 $} &  $9$& $3$& $12$ & $1$& \mbox{$
(1^{3}, 3^{3})$}
\\
28&\mbox{$133= 7 \cdot  19 $ }&  $9$& $3$& $12$ & $1$ & \mbox{$
(1^{2}, 2^{3}, 4)$}
\\
29&\mbox{$133= 7 \cdot  19 $} &  $9$& $3$& $12$ & $1$ & \mbox{$
(1^{5}, 3, 4)$}
\\
30 &\mbox{$133= 19 \cdot  7 $ }&  $3$& $9$ & $12$ & $1$ & \mbox{$
(1^{6}, 2^{3})$}
\\
31&\mbox{$133= 19 \cdot  7 $} &  $3$& $9$& $12$ & $1$ & \mbox{$
(1^{9}, 3)$}
\\
32& \mbox{$265= 5 \cdot  53 $} &  $13$& $1$& $12$ & $2$& \mbox{$ (2,
3^{2}, 4)$}
\\
33&\mbox{$265= 5 \cdot  53 $} &  $13$& $1$ &$12$ & $2$& \mbox{$
(1^{2}, 2, 4^{2})$}
\\
34&\mbox{$265= 5 \cdot  53 $} &  $13$& $1$ &$12$ & $2$& \mbox{$(1,
2^{3}, 5)$}
\\
35 &\mbox{$265= 53 \cdot  5 $} &  $1$& $13$&$12$ & $2$ & \mbox{$
(1^{10}, 2)$}
\\
36 & \mbox{$793= 13 \cdot  61 $}&  $5$& $1$ & $12$ & $6$&
\mbox{$(1^{2}, 2^{5})$}
\\
37 & \mbox{$793= 13 \cdot  61 $} &  $5$& $1$ & $12$ & $6$&  \mbox{$
(1^{5}, 2^{2}, 3)$}
\\
38& \mbox{$793= 61 \cdot  13 $} &  $1$&$5$ & $12$ & $6$ & \mbox{$
(1^{10}, 2)$}
\\
\hline
\end{array}
$$
\caption{Parameter Sets for $9\leq k\leq 12$}\label{Table-fangli}
\end{table}
\end{center}

\begin{center}
\begin{table}[ht]\label{T4}
$$
\begin{array}{|l|l|l|}
\hline
\mbox{\sc Line} & \mbox{Group $G^{\cC}$}   &  \mbox{Group $G^C$}\\
\hline\hline
1-2  & 5:[2a],\mbox{where $a\mid 2$, or $A_5,S_5$} &   29:[2b],\mbox{where $b=1,2,7$}\\
3  & 29:[2a],\mbox{ where $a\mid 14$}            &   5:[2b],\mbox{where $b\mid 2$}\\
4 & 7:[3a],\mbox{where $a\mid 2$, or $L_3(2)$}  &  31:[3b],\mbox{where $b=1,2,5$}\\
5 & 31:[3a], \mbox{where $a=1,2,5$}             &  7:[3b],\mbox{where $b\mid 2$}\\
6 & 17:[4a],\mbox{where $a\mid 2 $}               &  17:[4b],\mbox{where $b\mid 4$}\\
\hline \hline
7&7:[a], a=1,2,3 & 13:[b], \mbox{where $b=1,2,3,4,6$ }\\
8&7:[a], a=1,2,3 & 13:[b], \mbox{where $b\mid 12$ or $L_3(3)$}\\
9&7:[a],\mbox{$a\mid 6$ or $L_3(2)$, $A_7$, $S_7$} & 13:[b], \mbox{where $b\mid 12$ or $L_3(3)$}\\
10& 13:[a], \mbox{$a=1,2,3,4,6$}& 7:[b], \mbox{where $a\mid 6$}\\
11& 13:[a], \mbox{$a=1,2,3,4,6$} & 7:[b], \mbox{where $b\mid 6$ or $L_3(2)$} \\
12& 11:[5a], \mbox{where $a\mid 2$, or $L_2(11)$}&41:[5b], \mbox{where $b\mid 8$}\\
13& 11:[5a], \mbox{where $a\mid 2$, or $L_2(11)$}&41:[5b], \mbox{where $b\mid 4$}\\
14& 41:[5a], \mbox{where $a\mid 8$} &      11:[5b], \mbox{where $b\mid 2$}\\
\hline \hline
15& 37:[a],\mbox{where $a=4, 12$ or $a\mid 18$} & 3:[b], \mbox{where $b\mid 2$}\\
16& 13:[2a],\mbox{where $a\mid 6$, or $L_3(3)$} & 17:[2b], \mbox{where $b\mid 8$}\\
17& 13:[2a],\mbox{where $a=1,2,3$ }              & 17:[2b], \mbox{where $b\mid 4 $}\\
18& 17:[2a],\mbox{where $a\mid 4$ }              & 13:[2b], \mbox{where $b\mid 6$}\\
19& 17:[2a],\mbox{where $a\mid 8$}              & 13:[2b], \mbox{where $b\mid 6$}\\
20& \mbox{affine or }\mbox{$Soc(G^\cC)=L_2(8)$ or $A_9$}       & \mbox{affine} \\
21-22& \mbox{affine or }\mbox{$Soc(G^\cC)=L_2(8)$ or $A_9$}    & \mbox{affine} \\
23& \mbox{affine or }\mbox{$Soc(G^\cC)=L_3(2)^2$ or  $A_7^2$}   & \mbox{affine} \\
24 & \mbox{affine}                     & 53:[13b],\mbox{where $b\mid 4$}\\
25 & 53:[13a],\mbox{where $a\mid 2$}            & \mbox{affine}\\
\hline \hline
26,28,29 & 7:[a],\mbox{where $a=1,2,3$} & 19:[b], \mbox{where $b\mid 18$ and $b<18$}\\
27 & 7:[a],\mbox{where $a\mid 6$  or $L_3(2)$}& 19:[b], \mbox{where $b\mid 18$}\\
30 & 19:[a],\mbox{where $a\mid 18$ and $a<18$ }& 7:[b], \mbox{where $b\mid 6$}\\
31 & 19:[a],\mbox{where $a\mid 18$ and $a<18$ }& 7:[b], \mbox{where $b\mid 6$}\\
32-34& 5:[2a],\mbox{where $a\mid 2$, or $A_5, S_5$} & 53:[2b], \mbox{where $b=1, 2, 13$}\\
35  & 53:[2a],\mbox{ where $a=1,2, 13$}               & 5:[2b], \mbox{where $b\mid 2$} \\
36 & 13:[6a],\mbox{where $a\mid 2$, or $L_3(3)$}& 61:[6b],\mbox{where $b\mid 10$}\\
37 & 13:[6a],\mbox{where $a\mid 2$, or $L_3(3)$}& 61:[6b],\mbox{where $b=1,2,5$}\\
38 & 61:[6a],\mbox{where $a\mid 10$}           &  13:[6b],\mbox{where $b\mid 2$}\\
 \hline
\end{array}
$$
\caption {Group information for $9\leq k\leq 12$}\label{Table-group}
\end{table}
\end{center}

\section{Detailed Analysis}

Throughout this Section, we assume that the {\sc Hypotheses} hold and that 
$9\leq k\leq 12$. Thus one of the {\sc Lines} in Tables~\ref{Table-fangli} 
and \ref{Table-group} holds.
We give a detailed group theoretic analysis for each of these
{\sc Lines}, thus proving Theorem~\ref{theorem:2}. In the following,  
``{\sc Line} $i$'' will denote the $i$-th line of
Tables~\ref{Table-fangli} and \ref{Table-group}. 
First we deal completely with {\sc Line} 15 and obtain some preliminary 
information for the other {\sc Lines}.

\begin{lemma} \label{lem:1} 
 \begin{enumerate}
 \item[(a)] If $S=Z_c$ and $d\nmid (c-1)$, then $Y\cap K=S\neq Y$ and
 $Y^\cC$ is transitive.
 \item[(b)] {\sc Line} $15$ does not give any examples, and
 moreover one of (i)-(iii) holds.
 \begin{enumerate}
 \item[(i)] One of {\sc Lines} $7-11$ holds, $G$ has a point and line regular cyclic subgroup $Z_7\times Z_{13}$ and $\cS=\PG_2(9)$.
 \item[(ii)] One of {\sc Lines} $26-31$ or $32-35$ holds and $G$ has a
 point-regular normal cyclic subgroup $Z_7\times Z_{19}$ or $Z_5\times Z_{53}$, respectively.
 In {\sc Lines} $26-31$, $\cS$ is a projective
 plane of order $11$.
 \item[(iii)] One of {\sc Lines} $1-6, 12-14, 16-25, 36-38$ holds,
 and $K=S\cong\Soc(G^C)$.
 \end{enumerate}
 \end{enumerate}
 \end{lemma}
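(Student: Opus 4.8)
The plan is to dispose of part~(a) first, then use it, together with the group-theoretic data in Tables~\ref{Table-fangli} and~\ref{Table-group} and the structural Lemmas~\ref{lem:KSXY}--\ref{lem:w}, to work through the remaining \textsc{Lines} for part~(b). For part~(a): we are in the $G$-normal case with $\cC$ minimal and maximal (by items (1)--(2) of Section~3), and $K=G_{(\cC)}$ with $S=\Soc(K)$. Since $\cC$ is minimal, $G^C$ is primitive, and by Lemma~\ref{lem:new}(c), $S$ acts faithfully on $C$, so $S^C\cong S=Z_c$ is a regular normal subgroup of the primitive group $G^C$. Hence $G^C\le\AGL_1(c)$ and $G^C/S$ embeds in $\Aut(S)=Z_{c-1}$. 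The hypothesis $d\nmid(c-1)$ then forces the top group $G^\cC$, which has order divisible by $d$, to contribute something outside $G^C$; more precisely, I want to show $Y=C_G(S)$ is not contained in $K$. Since $S=Z_c$ is abelian, $S\le Y$, and if $Y\le K$ then $Y$ would be a normal abelian subgroup of the primitive group $G^C$ with $Y^C\le C_{G^C}(S^C)=S^C$, giving $Y=S$ and hence $|G:K|=|G^\cC|$ dividing $|\Aut(S)|=c-1$ (after identifying $G/K\cong G^\cC$ acting on $\cC$), contradicting $d\mid|G^\cC|$ and $d\nmid(c-1)$. Therefore $Y\not\le K$, so $Y^\cC\ne 1$; since $\cC$ is maximal, $G^\cC$ is primitive, and $Y^\cC$ is a nontrivial normal subgroup of a primitive group, hence transitive on $\cC$. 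Combining with Lemma~\ref{lem:KSXY}(a)(i), $Y\cap K\ne 1$ forces $Y\cap K=S$, and we are done with~(a).

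For part~(b), \textsc{Line}~15 has $v=111=37\cdot 3$, with $c=3$, $d=37$, and $S=Z_3$ (from Table~\ref{Table-group}, $G^C\le 3{:}[2b]$ with $b\mid 2$, and $\Soc(G^C)=Z_3$). Here $d=37\nmid(c-1)=2$, so part~(a) applies: $Y\cap K=S\ne Y$ and $Y^\cC$ is transitive. Now $G^\cC\le 37{:}[a]$ with $a=4,12$ or $a\mid 18$, which is solvable, so $\Soc(G^\cC)=Z_{37}$ is abelian; moreover $\gcd(c,d)=\gcd(3,37)=1$, so Condition~1 of Lemma~\ref{lem:KSXY}(b) holds. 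Thus we obtain a normal subgroup $M=T\times S$ of $G$ with $T\cong Z_{37}$ minimal normal and $M$ regular on $\cP$ (case (b)(ii), since $T$ is abelian). Then $c=3$ and $d=37$ are distinct odd primes, so Lemma~\ref{lem:w} applies. The Delandtsheer-Doyen parameters are $x=1$, $y=18$, so $y>8\ge x$; by Lemma~\ref{lem:w}(b) we are in case~(ii): $h'\in G_\alpha$ is an involution with $\Fix_\cP(h')=T\in\cC'$, where $h'$ has order $2$ in $\Aut(S)=Z_2$. But $\Aut(S)=Z_2$ means $h'$ is the inversion map on $S=Z_3$, and $g'$ on $T=Z_{37}$ also exists; then $g'h'$ inverts $M$, and one must check whether the presence of $h'$ in $G_\alpha$ is compatible with $G$ being line-transitive on an actual linear space with these parameters. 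The key point is that for $\PG_2$-type or more general counting reasons with $c=3$, these parameters do not support a linear space: I would derive a contradiction from the intersection type $(1^9,2)$ together with the action of the regular group $M=Z_3\times Z_{37}$, showing no $G$-invariant line set of the required size exists. This combinatorial elimination of \textsc{Line}~15 is the main obstacle, and I expect it to be handled by a short explicit argument (possibly invoking \cite{PraegerTuan,grids} as in the proof of Lemma~\ref{lem:w}) rather than by pure group theory.

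Finally, the trichotomy (i)--(iii) is obtained by sorting the remaining \textsc{Lines} according to whether part~(a)'s hypothesis $d\nmid(c-1)$ holds. When $d\mid(c-1)$ we are in the situation where $S^C=\Soc(G^C)$ can already account for all of $K$, i.e. $K=S$; checking $d\mid(c-1)$ against Table~\ref{Table-fangli} picks out exactly \textsc{Lines} $1$--$6$, $12$--$14$, $16$--$25$, $36$--$38$, giving case~(iii) once one confirms $K=S\cong\Soc(G^C)$ (using $S^C\cong S$ and minimality of $\cC$). When $d\nmid(c-1)$, part~(a) gives $Y\cap K=S\ne Y$, and one checks which of Conditions~1 or~2 of Lemma~\ref{lem:KSXY}(b) holds (abelian socle with $\gcd(c,d)=1$, or nonabelian socle with the Schur-multiplier condition), producing the regular normal subgroup $M=T\times S$; identifying $T$ as $Z_d$ (respectively the relevant cyclic group) yields the point-regular cyclic group $Z_7\times Z_{13}$, $Z_7\times Z_{19}$, or $Z_5\times Z_{53}$. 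The statements that $\cS=\PG_2(9)$ in case~(i) and that $\cS$ is a projective plane of order $11$ in case~(ii) follow since in \textsc{Lines} $7$--$11$ and $26$--$31$ we have $b/v=1$, so $b=v$ and $r=k$, which characterises projective planes; the identification of the order and the Desarguesian conclusion in case~(i) then comes from \cite[Lemma~4.5]{grids} as quoted in Remark~\ref{rem1}.
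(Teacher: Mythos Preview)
Your argument for part~(a) is essentially the paper's, just phrased slightly differently; it is correct.

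Part~(b), however, has two genuine gaps.

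\textbf{Elimination of \textsc{Line}~15.} You reach $M=T\times S\cong Z_{37}\times Z_3$ regular on~$\cP$ and then stop, saying you ``expect'' a short combinatorial argument to finish. You do not give one, and your appeal to Lemma~\ref{lem:w}(b) is misapplied (you quote case~(ii) but with $x=1$, $y=18$ it would be case~(i), and in any event that lemma assumes $|G|$ even, which you have not checked, and its conclusions do not yield a contradiction here). The paper's actual argument is much cleaner and purely structural: once you have the second $G$-normal partition~$\cC'$ consisting of the $T$-orbits (so $|\cC'|=c=3$ classes of size~$d=37$), its parameters must also occur as a \textsc{Line} of Table~\ref{Table-fangli} for $k=11$. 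But there is no \textsc{Line} with $(d,c)=(3,37)$, so \textsc{Line}~15 is impossible. The same dual-partition trick is what the paper uses in \textsc{Lines}~27 and~32--34 to force $T$ cyclic when Table~\ref{Table-group} a~priori allows a nonabelian $\Soc(G^\cC)$; your sketch of cases (i)--(ii) glosses over this issue as well (for instance in \textsc{Line}~9 one could have $\Soc(G^\cC)=A_7$, so Lemma~\ref{lem:KSXY}(b) alone does not give $T\cong Z_7$).

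\textbf{The criterion for case~(iii).} You claim that ``checking $d\mid(c-1)$ against Table~\ref{Table-fangli} picks out exactly \textsc{Lines} $1$--$6$, $12$--$14$, $16$--$25$, $36$--$38$''. This is false: for example \textsc{Lines}~1--2 have $(d,c)=(5,29)$ with $5\nmid 28$, and in fact $d\nmid(c-1)$ holds for essentially every \textsc{Line} in case~(iii). Consequently your proposed justification of $K=S$ (``using $S^C\cong S$ and minimality of $\cC$'') does not work; minimality of~$\cC$ only gives $S^C$ regular, not $K=S$. The paper instead observes that every \textsc{Line} in case~(iii) has Delandtsheer--Doyen parameter $x\le 8$ and invokes \cite[Theorem~1.6]{PraegerTuan}, which directly yields $K=S\cong\Soc(G^C)$. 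This external ingredient is essential and is what your proposal is missing.
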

\begin{proof}
(a) Recall (from Section 3) that $\cC$ is maximal and minimal.
Suppose that $S=Z_c$ and $d\nmid (c-1)$. Since $G^C$ is primitive on
$C$ with socle $Z_c$, $c$ must be a prime. Hence $G/Y\leq {\rm
Aut}(S)=Z_{c-1}$. So $d\nmid |G/Y|$, and therefore $Y^\cC$ is a
non-trivial normal subgroup of the primitive group $G^\cC$, so
$Y^\cC$ is transitive. In particular $Y\neq Y\cap K$. Also by
Lemma~\ref{lem:new}(c), $K\cong K^C\leq {\rm AGL}(1,c)$, so $Y\cap
K=C_K(S)=S$.

(b) We deal with {\sc Line} 15 in (b)(ii) below.

(b)(i) {\sc Lines 7-11}: Here $\cS$ is a projective plane and in all
these {\sc Lines} the parameter $x$ is 3 or 6. By
Table~\ref{Table-group}, $S={\rm Soc}(K)\cong Z_c$, and $d\nmid
(c-1)$. Thus by part (a), $Y\cap K=S$ and $Y^\cC$ is transitive.
Hence $Y$ contains an element $g$ of order a power of $d$ such that
$\langle g\rangle^\cC$ is transitive. Then $g^d\in Y\cap K=S$ and is
a $d$-element, so $g^d=1$. Thus $\langle g, S\rangle\cong Z_{91}$ is
regular on points and lines of $\cS$. By \cite[Lemma 4.5]{grids},
$\cS$ is the Desarguesian projective plane $\PG_2(9)$.

(b)(ii) {\sc Lines} 15 and 26--35: In each of these {\sc Lines},
$S=Z_c$ is as in Table~\ref{Table-group}, and $d\nmid (c-1)$. So by
part (a), $Y\cap K=S\ne Y$ and $Y^\cC$ is transitive. Hence $Y^\cC$
contains $\Soc(G^{\cC})$ since in all cases $G^\cC$ has a unique 
minimal normal subgroup. In all cases the conditions of
Lemma~\ref{lem:KSXY}(b) hold and so we obtain $T\leq Y$,
$T\vartriangleleft G$ and $T\cong \Soc(G^{\cC})$. Again, in all cases
$c\nmid |T|$ so $T$ is intransitive with $c$ orbits of length $d$,
and the $T$-orbits form another $G$-normal partition $\cC'$ with $c$
classes of size $d$. In particular for {\sc Line} 27, the parameters for 
$\cC'$ should satisfy {\sc Line} 30 or 31, and so $T\cong Z_7$.
Similarly, for {\sc Lines} 32--34, the parameters for 
$\cC'$ should satisfy {\sc Line} 35, and so $T\cong Z_5$. Thus for all
of the {\sc Lines} under consideration, $T\cong Z_d$ and  
$M=T\times S$ is a point-regular normal cyclic
subgroup of $G$. In {\sc Lines} 15 and 26-31, $b/v=1$ so $\cal S$ is a
projective plane. However, for {\sc Line} 15, there is no 
corresponding {\sc Line}  with parameters $(d,c)=(3,37)$, so {\sc Line}  
15 is ruled out. (Note that {\sc Line} 15 corresponds to a projective
plane of order 10, and it was shown in \cite{LTS} that there is no such 
projective plane.)

(b)(iii) In all of these remaining {\sc Lines}, $x\leq 8$, and so by
\cite[Theorem 1.6]{PraegerTuan}, $K=S\cong \Soc(G^C)$.
\hfill$\square$

\end{proof}

\medskip

Note that, by Lemma~\ref{lem:1}, Theorem~\ref{theorem:2} is proved if one of
{\sc Lines} 7--11, 15, or 26--31 holds. Therefore from now on we will assume that one
of the other {\sc Lines} holds, namely {\sc Lines} 1--6, 12--14,
16--25, or 32--38. We next deal with {\sc Lines} 20-23, which required different 
arguments from the other {\sc Lines}.

\begin{lemma}
\label{lem:20-23} {\sc Lines} $20-23$ 
give no examples.
\end{lemma}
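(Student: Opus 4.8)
The plan is to exploit the fact that in Lines 20--23 we have $v=441=9\cdot 49=3^2\cdot 7^2$, $k=11$, $b/v=4$, and $\gcd(k,v)=1$, and that by point~(1) of Section~3 the partition $\cC$ is $G$-normal, while by point~(2) both $G^\cC$ and $G^C$ are primitive. Since $441$ is not a prime power in a way that makes the bottom group $G^C$ a prime-degree group, I first read off from Table~\ref{Table-group} that in all four Lines the bottom group $G^C$ is affine; combined with $G^C$ primitive this forces $C$ to have prime-power size and $\Soc(G^C)$ elementary abelian, hence $S=\Soc(K)$ is elementary abelian of order $c$ (namely $c=9$ in Lines 20--22 and $c=49$ in Line 23). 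By Lemma~\ref{lem:KSXY}(a), $\cC$ is the set of $S$-orbits. The next step is to determine $Y\cap K$: since $S$ is elementary abelian, Lemma~\ref{lem:KSXY}(a)(i) gives $Y\cap K=1$ or $Y\cap K=S$. I would argue, as in Lemma~\ref{lem:1}(a), that because $d\nmid(c-1)$ in each of these Lines ($d=49\nmid 8$ in Lines 20--22, $d=9\nmid 48$ in Line 23), $G/Y$ embeds in an affine-type automorphism group of $S$ that cannot realise the full top degree $d$, so $Y^\cC\ne1$; as $G^\cC$ is primitive this makes $Y^\cC$ transitive and hence $Y\cap K=S\ne Y$.

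With $Y\cap K=S\ne Y$ established, I would verify that the hypotheses of Lemma~\ref{lem:KSXY}(b) hold. Here the relevant condition depends on $\Soc(G^\cC)$: from Table~\ref{Table-group}, in Lines 20--22 the top group is affine or has socle $L_2(8)$ or $A_9$, and in Line 23 it is affine or has socle $L_3(2)^2$ or $A_7^2$. When $G^\cC$ is affine, Condition~1 of Lemma~\ref{lem:KSXY}(b) applies provided $\gcd(c,d)=1$, which holds since $c,d$ are powers of the distinct primes $3$ and $7$. When $\Soc(G^\cC)$ is non-abelian, Condition~2 applies once one checks that the Schur multiplier of the relevant simple group ($L_2(8)$, $A_9$, $L_3(2)$, $A_7$) has no section isomorphic to $S\cong Z_3^2$ or $Z_7^2$: the multipliers of $L_2(8)$, $A_9$, $L_3(2)$ are trivial, trivial, and $Z_2$ respectively, and that of $A_7$ is $Z_6$, none of which has a section $Z_3^2$ or $Z_7^2$. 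So in every case Lemma~\ref{lem:KSXY}(b) yields a normal subgroup $M=T\times S$ of $G$ with $T$ minimal normal in $G$ and $T^\cC$ minimal normal in $G^\cC$; since $c\nmid|T^\cC|=|T|$ (as $|T|$ is a power of the prime not dividing $c$), $T$ is intransitive with $c$ orbits of size $d$, so alternative (ii) of Lemma~\ref{lem:KSXY}(b) holds, and the $T$-orbits form a $G$-normal partition $\cC'$ with $|\cC'|=c$, classes of size $d$, and $|C\cap C'|=1$.

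The crux of the argument is then that $\cC'$ must itself appear as one of the Lines in Table~\ref{Table-fangli} with the roles of $c$ and $d$ interchanged. For Lines 20--22 this would require a Line with $(d,c)=(49,9)$, and for Line 23 a Line with $(d,c)=(9,49)$; but Line 23 is the only Line with $v=441$ of the form $49\cdot 9$ and Lines 20--22 are the only ones of the form $9\cdot 49$, and these have pairwise incompatible intersection types for the dual partition (the Delandtsheer--Doyen parameters $x,y$ swap roles, which would force the $\cC'$-intersection type to match some Line, and none does consistently). More concretely, running the same structural analysis on $\cC'$ shows that the bottom group for $\cC'$, namely $G^{C'}$, would have to be primitive of degree $d$; in Lines 20--22, $d=49$ and one would need a Line with $v=441=49\cdot9$ and $x,y$ swapped, i.e.\ Line 23's parameters, but then simultaneously $\cC$ (degree $9$) and $\cC'$ (degree $49$) would have to be consistent, and checking the intersection types $(1^2,2^3,3)$, $(1^5,3^2)$, $(1^7,4)$ against $(1^9,2)$ produces a contradiction in the count $\sum_i i\,d_i=k=11$ of points-per-line restricted to the grid structure $|C\cap C'|=1$. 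The main obstacle I anticipate is making this last incompatibility rigorous rather than merely "no matching Line": the clean way is to observe that a line $\ell$ meets the grid $\{C\cap C'\}$, and since $|C\cap C'|=1$, the $\cC$-intersection type and the $\cC'$-intersection type of $\ell$ together with $k=11$ over-determine $\ell$; pushing this through Delandtsheer--Doyen equation~\eqref{eqn:DD} for both partitions forces $x=y$, contradicting the tabulated values $x=6,y=1$ (Lines 20--22) and $x=1,y=6$ (Line 23). Hence no example arises.
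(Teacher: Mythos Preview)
Your proposal has several genuine gaps that prevent it from going through.

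First, you have $c$ and $d$ interchanged: in the paper's convention $v=d\cdot c$, so Lines 20--22 have $(d,c)=(9,49)$ and $S\cong Z_7^2$, while Line 23 has $(d,c)=(49,9)$ and $S\cong Z_3^2$. More seriously, your argument that $Y^\cC\ne1$ ``because $d\nmid(c-1)$'' borrows the hypothesis of Lemma~\ref{lem:1}(a), which requires $S=Z_c$ cyclic. Here $S$ is elementary abelian of rank~2, so $G/Y$ embeds in $\GL(2,c_0)$, not in $Z_{c-1}$. For Lines 20--22 one has $d=9$ and $9\mid|\GL(2,7)|$, so the divisibility obstruction fails; the paper needs a separate argument using the Sylow $3$-structure of $\GL(2,7)$ to rule out $Y^\cC=1$.

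Second, Condition~2 of Lemma~\ref{lem:KSXY}(b) does \emph{not} hold when $\Soc(G^\cC)=A_7^2$: the Schur multiplier of $A_7^2$ is $Z_6\times Z_6$, which visibly contains a section $Z_3^2\cong S$. The paper treats $A_7^2$ by an entirely different Sylow $5$-subgroup fixed-point count.

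Third, your assertion that $c\nmid|T|$ fails for $T\cong L_3(2)^2$ in Line~23, since $|L_3(2)^2|=168^2$ is divisible by $9=c$. The paper handles this by showing $T$ is then transitive and analysing $T_\alpha$ and $N_T(T_\alpha)$ directly.

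Finally, and most importantly, your ``crux'' argument is not a contradiction at all. The dual partition $\cC'$ for Lines 20--22 has parameters matching Line~23, and vice versa; this is exactly what one expects from swapping $(c,d)$ and $(x,y)$ in the Delandtsheer--Doyen equations, and the paper explicitly uses this symmetry to reduce the affine case to $(d,c)=(9,49)$. Nothing forces $x=y$. After this reduction the paper still needs a substantial Sylow $2$-subgroup analysis (using $4\mid b$, semiregularity of $M$ on lines, and the structure of $2$-subgroups of $\GL(2,3)$ together with Lemma~\ref{lem:new}(b) and Lemma~\ref{lem:w}) to finish the affine case. Your outline omits all of this work.
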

\begin{proof} Suppose that one of {\sc Lines} $20-23$ holds and 
let $c=c_0^2, d=d_0^2$, where $c_0=7, d_0=3$ in the case of {\sc Lines} 20-22,
and $c_0=3, d_0=7$ in {\sc Line} 23. By Lemma~\ref{lem:1}(b)(iii) and Table~\ref{T4},
$K=S=\Soc(G^C)\cong Z_{c_0}^2$ and so $G/Y\leq \GL(2, c_0)$. Since
 $S^C\cong S$ is self-centralising in
$G^C$ it follows that $S^C=(Y\cap K)^C\cong Y\cap K$, and hence $Y\cap K=S$. We
claim that $Y^{\cC}\neq 1$. Suppose to the contrary that
$Y^{\cC}=1$. Then $Y=Y\cap K=S$. Thus $Y\leq K$ and hence $|G^{\cC}|$ 
divides $|G/Y|$, which in turn divides $|\GL(2,c_0)|$. In particular $d$ divides
$|\GL(2,c_0)|$, and so $d=9$ and $d$ divides the order of $G/Y\leq\GL(2,7)$. 
Since $27$ does not divide $|\GL(2,7)|$, and since $Y=S=Z_7^2$, it follows that
a Sylow $3$-subgroup $P$ of $G$ is isomorphic to a Sylow $3$-subgroup $Z_3\times Z_3$
of $\GL(2,7)$. Thus $P$ has a subgroup $P_0\cong Z_3$ such that ${P_0Y/Y}
\cong P_0$ is contained in the centre of $\GL(2,7)$. This implies that ${P_0Y/Y}\vartriangleleft G/Y$. 
Hence $P_0S=P_0Y\vartriangleleft G$ and so
$P_0^{\cC}=(P_0S)^{\cC}\vartriangleleft G^\cC$. This is a
contradiction since $|P_0^\cC|=3$ and all nontrivial normal
subgroups of $G^\cC$ are transitive on $\cC$. Hence
 $Y^\cC\ne 1$ and the claim is proved. Now $Y\cap K=S$,
so we have  $S<M\leq Y$ with $M\lhd G$ and $M/S\cong M^\cC\cong
\Soc(G^{\cC})$.

{\sc Case} 1. $M^{\cC}=\Soc(G^\cC)=Z_{d_0}^2$. In this case $M$ is 
abelian with a unique Sylow $c_0$-subgroup
$T=Z_{c_0}^2$ and $M=T\times S$. Moreover $M$ is regular on $\cP$, 
$T$ is normal in $G$ 
and semiregular on $\cP$, and the set of $T$-orbits forms
a second $G$-normal partition $\cC'$ of $\cP$, with $c$ parts of size $d$,
for which one of {\sc Lines} 20-23 holds. 
Thus, for the remainder of our consideration of this case, we may 
assume that $S=Z_7^2$, $T=Z_3^2$, and we may identify $\cP$ with $M$
so that $M$ acts by right multiplication and, for  $\alpha=1_M$, we have $G=M.G_\alpha$ with
$G_\alpha\leq {\rm Aut}(M)=\GL(2,3)\times \GL(2,7)$ acting by
conjugation.

Since $4$ divides $b$, a Sylow 2-subgroup $Q$ of $G$ has order at least 4, and since 
$v$ is odd, $Q$ fixes a point. Replacing $Q$ by a conjugate if necessary, we 
may assume that $Q$ fixes $\alpha$. Let $C\in \cC$, $C'\in\cC'$ such that $C\cap
C'=\{\alpha\}$. Then $Q$ fixes $C,C'$ setwise and we note that $|C'\setminus
\{\alpha\}|=8$. Since $4\mid b$, all $Q$-orbits on $\cL$ have length a 
multiple of 4. In particular $Q$ fixes no lines and
hence $\alpha$ is the unique fixed point of $Q$. Moreover, since for any
point $\beta\ne\alpha$, $Q_{\alpha\beta}$ fixes the line $\la(\alpha,
\beta)$ setwise, it follows that all $Q$-orbits in $\cP\setminus
\{\alpha\}$ have length a multiple of 4. Since all entries in the
$\cC$-intersection type are at most 4, no element of $S$
of order 7 fixes a line, so $S$ is semiregular on $\cL$. Similarly, since
the $\cC'$-intersection type is $(1^9,2)$, no element of $T$ of
order 3 fixes a line, and so $T$ is semiregular on lines. Thus $M$ is
semiregular on $\cL$ with 4 orbits of length $v$. It follows that
$MQ$ is transitive on
 $\cL$, so without loss of generality we may assume that $G= MQ$. Further,
 since each element of order 2 in $Q$ fixes some line,
we have $|Q|\geq 8$.

If $Q\cap K\neq 1$, then $K$ is not semiregular, and
Lemma~\ref{lem:w}(b)(iv) applies giving a contradiction.
Thus $Q\cap K=1$ and so $Q\cong Q^\cC\leq \GL(2,3)$. Each 
Sylow 2-subgroup of $\GL(2,3)$ is isomorphic to $\Gamma{\rm L}(1,9)=Z_8\cdot 2$
and contains the central involution $z_0$ of $\GL(2,3)$. Then, since 
$|Q|\geq8$, it follows that $Q$ contains an involution $z$ such that 
$z^\cC =z_0$, and therefore $\langle M,z\rangle$ is normal in $G$. 
If in addition $z^{\cC'}\neq 1$, then $z^{\cC'}$ is the central involution of
$\GL(2,7)$, and so $z$ inverts each element of $M$, contradicting
Lemma \ref{lem:new}(a). Hence  $z^{\cC'}=1$, and
$G_{(\cC')}$ contains $\langle T,z\rangle$
 which is not semiregular on $\cP$. Again Lemma~\ref{lem:w}(b)(iv) applies and we have a contradiction.

{\sc Case} 2. $M^{\cC}$ is non-abelian.

{\sc Case} 2a. {\sc Lines} 20-22 with $M^{\cC}=L_2(8)$ or $A_9$, or  
{\sc Line} 23 with $M^{\cC}=L_3(2)^2$.

The Schur multiplier of $L_2(8)$, $A_9$ or $L_3(2)^2$ is $1$, $Z_2$,
or $Z_2^2$ respectively, so the conditions of Lemma~\ref{lem:KSXY}(b)
hold. Hence $G$ has a normal
subgroup $T\times S$ where $T\cong M^{\cC}$, and either $T$ is 
transitive on $\cP$ or the set of $T$-orbits forms a $G$-normal partition 
$\cC'$ with $c$ parts of size $d$. In the latter case, parameters for the 
$G$-normal partition $\cC'$ should satisfy {\sc Line} 23 if one of 
{\sc Lines} 20-22 holds for $\cC$, or one of the  {\sc Lines} 20-22
if  {\sc Line} 23 holds for $\cC$. In particular, by Table~\ref{T4}, 
$T$ should be abelian, which is a contradiction. Hence $T$ is
transitive on $\cP$, and in particular $c \mid |T|$. This implies that
$c=9$ and $T=L_3(2)^2$. Then $|T|=2^6 v$ and hence the stabiliser 
$T_\alpha$ in $T$ of a point $\alpha$ is a Sylow $2$-subgroup of $T$.
Thus $T_\alpha = (Z_2^3)^2$ and $N_T(T_\alpha)=(Z_2^3\cdot Z_7)^2$. 
By \cite[Theorem 4.2A]{DM}, the set $F$ of fixed points of 
$T_\alpha$ in $\cP$ has size $|N_T(T_\alpha):T_\alpha|=49= |C_{\rm Sym(\cP)}(T)|$ and 
$C_{\rm Sym(\cP)}(T)$ acts regularly on $F$. However $S=Z_3^2$
centralises $T$ and cannot be a subgroup of a group of order 49.
So this case cannot arise.

{\sc Case} 2b. {\sc Line} 23 with $M^{\cC}=A_7^2$.

Here
$\cC=\cC_1\times \cC_2$, where $\cC=\{C_{ij} | 1\leq i\leq 7,\ 1 \leq
j\leq 7 \}$,  and $A_7^2\leq G^{\cC}\leq S_7 \wr
S_2$ in product action. $\Soc(G^\cC)$ acts on $\cC$ as follows. Let
$(h_1,h_2)\in A_7^2\leq G^\cC$, and $C_{ij}\in \cC$. Then
$C_{ij}^{(h_1,h_2)}=C_{i^{h_1},j^{h_2}}$.
 Let $P$ be a Sylow 5-subgroup of $G$. Since $\gcd(5,b)=1$, $P$ is a
 Sylow 5-subgroup of $G_\la$ for some $\la\in \cL$. Note that if $P$ fixes
 $C_{ij}\in \cC$ setwise, then since $|\AGL(2,3)|$ is not divisible by 5, $P$ must fix
 $C_{ij}$ pointwise. Hence
 $\Fix_\cP(P)$ is a union of $\cC$-classes. Without loss of generality, assume that $P^\cC=\langle
 (12345)\rangle \times \langle
 (12345)\rangle $, so $\Fix_\cC(P)=\{C_{ij}|\,\, i,j=6 \ \ \mbox{or }
 7\}.$
Then $|\Fix_\cC(P)|=4$.
 Since the intersection type is $(1^9,2)$ and since $P\leq G_\la$,  $P$
fixes setwise the unique class, say $C$, with $|C\cap \la|=2$, and $P$
 fixes setwise the 9 classes $C'$ such that $|C'\cap \la|=1$. Since $9\equiv 4\pmod 5$,
$P$ must fix setwise at least 4 of these 9 classes and hence $P$
 fixes at least $1+4=5$ classes setwise. This contradiction
 completes the proof. \hfill$\square$

\end{proof}

\medskip
We now show for all the remaining cases, namely {\sc Lines} 1--6, 
12--14, 16--19, 24--25 and 32--38, that $G$ has a normal abelian point-regular subgroup.

\begin{lemma}\label{lem:2} The group $G$ has a point-regular, normal abelian subgroup $M$,
and either
 \begin{enumerate}
\item[(a)] {\sc Line} $6$ holds, and $M=Z_{17}\times
Z_{17}$ or $Z_{17^2}$, or
\item[(b)] $M=T\times S$, where
\[
S=\left\{\begin{array}{ll}
Z_3^3&\mbox{if {\sc Line} $25$ holds}\\
Z_c  &\mbox{for all other {\sc Lines}}\\
           \end{array}\right.\quad\mbox{and}\quad
T=\left\{\begin{array}{ll}
Z_3^3&\mbox{if {\sc Line} $24$ holds}\\
Z_d  &\mbox{for all other {\sc Lines}.}\\
           \end{array}\right.
\]
\end{enumerate}

\end{lemma}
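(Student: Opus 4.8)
The plan is to treat the remaining {\sc Lines} uniformly via Lemma~\ref{lem:KSXY}(b) and Lemma~\ref{lem:1}, splitting off {\sc Line} 6 at the outset since there $c=d$ so the Delandtsheer--Doyen partition may be self-paired. First I would record, from Lemma~\ref{lem:1}(b)(iii), that in all {\sc Lines} under consideration $K=S\cong\Soc(G^C)$, and from Table~\ref{Table-group} that $S$ is the elementary abelian group listed (cyclic of prime order in every {\sc Line} except $24$ and $25$, and $Z_3^3$ in {\sc Lines} $24,25$; for {\sc Line} $6$, $S=Z_{17}$ or the relevant group depending on whether $G^C$ has socle $Z_{17}$). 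In particular $S$ is elementary abelian, so $Y\cap K\in\{1,S\}$ by Lemma~\ref{lem:KSXY}(a)(i); since $S=K$ is abelian it centralises itself, so $S\leq C_G(S)\cap K = Y\cap K$, forcing $Y\cap K=S$. The next step is to check that $Y\neq K$, equivalently $Y^\cC\neq 1$: for the {\sc Lines} where $d\nmid(c-1)$ this is immediate from Lemma~\ref{lem:1}(a); for the remaining {\sc Lines} I would argue as in the proof of Lemma~\ref{lem:20-23}, that if $Y=S$ then $d\mid|G^\cC|$ divides $|\mathrm{Aut}(S)|$, and then a central $d'$-element of $\mathrm{Aut}(S)$ pulls back to a normal subgroup of $G$ whose image in $G^\cC$ is proper and non-trivial, contradicting primitivity (minimality) of $G^\cC$.

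With $Y\cap K=S\neq Y$ established, I would verify the hypotheses of Lemma~\ref{lem:KSXY}(b). Since $\cC$ is maximal (Section~3), $G^\cC$ is primitive; consulting Table~\ref{Table-group} for each remaining {\sc Line}, $\Soc(G^\cC)$ is either abelian, in which case I check $\gcd(c,d)=1$ (true in every {\sc Line} except $6$, and true in {\sc Lines} $24,25$ where $\{c,d\}=\{27,53\}$), so Condition~1 applies; or $\Soc(G^\cC)$ is one of $L_3(3)$, $L_2(11)$, $L_3(2)$, $A_5$, $S_5$, etc., whose Schur multipliers are cyclic of small order with no section isomorphic to the elementary abelian $S$ in question, so Condition~2 applies. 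Hence $G$ has a normal subgroup $M=T\times S$ with $T$ minimal normal in $G$ and $T^\cC$ minimal normal in $G^\cC$. Then Lemma~\ref{lem:KSXY}(b)(i)--(ii) gives the dichotomy: either $T$ is non-abelian and transitive on $\cP$, or the $T$-orbits form a $G$-normal partition $\cC'$ with $c$ classes of size $d$ and $M$ is regular on $\cP$ (using also that, for our {\sc Lines}, $c\nmid|T|$ since $T\cong T^\cC\cong\Soc(G^\cC)$, ruling out $T$-transitivity when $\Soc(G^\cC)$ is abelian).

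The main obstacle is eliminating the non-abelian-transitive-$T$ branch and pinning down the isomorphism type of $T$. For this I would use the parameter-matching argument already exploited in Lemmas~\ref{lem:1} and~\ref{lem:20-23}: the $G$-normal partition $\cC'$ with parameters $(|\cC'|,|C'|)=(c,d)$ must itself appear as one of the {\sc Lines} of Table~\ref{Table-fangli} with the roles of $c,d$ swapped (the ``dual'' {\sc Line}); reading off the bottom-group socle for that dual {\sc Line} from Table~\ref{Table-group} forces $T\cong Z_d$ (respectively $Z_3^3$ in {\sc Line}~24, where the dual is {\sc Line}~25), and in particular $T$ is abelian, so the non-abelian branch does not occur. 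When $\Soc(G^\cC)$ is non-abelian — {\sc Lines} with $G^\cC$ having socle $L_3(3)$, $L_2(11)$, $L_3(2)$, or $A_5$ — the putative transitive $T$ would satisfy $c\mid|T|$; I would rule this out either by the same dual-{\sc Line} argument (no {\sc Line} of Table~\ref{Table-fangli} has a non-abelian bottom-group socle of the required order) or, failing that, by a fixed-point count on a point stabiliser $T_\alpha$ as in Case~2a of Lemma~\ref{lem:20-23}, showing $C_{\Sym(\cP)}(T)$ is too small to contain $S$. This leaves exactly $M=T\times S$ abelian and point-regular, with $T,S$ as in the statement; and for {\sc Line}~6 the same analysis with $c=d=17$ yields $M=Z_{17}\times Z_{17}$, unless the self-paired partition forces the cyclic case $M=Z_{17^2}$, which I would obtain by observing that a point stabiliser in $H$ fixing a unique point (as in the conclusion of Theorem~\ref{theorem:2}) is compatible with either abelian group of order $17^2$.
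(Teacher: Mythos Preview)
Your overall strategy coincides with the paper's: invoke Lemma~\ref{lem:1}(b)(iii) to get $K=S$, verify $Y\cap K=S\neq Y$, apply Lemma~\ref{lem:KSXY}(b) to produce $M=T\times S$, and then use the dual-{\sc Line} matching in Table~\ref{Table-fangli} to force $T$ abelian of the stated shape. A few corrections and one real gap:

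\smallskip
\textbf{Minor slips.} In {\sc Line}~24 the class size is $c=53$, so $S=\Soc(G^C)\cong Z_{53}$, not $Z_3^3$; only in {\sc Line}~25 is $S\cong Z_3^3$. Also, Lemma~\ref{lem:1}(b)(iii) does not cover {\sc Lines}~32--35 (there $x=13>8$ in three of them); for those {\sc Lines} the paper simply cites Lemma~\ref{lem:1}(b)(ii), which already furnishes the cyclic point-regular $M=Z_5\times Z_{53}$. Your detour through a Lemma~\ref{lem:20-23}-style argument for $Y\neq K$ is unnecessary: in every remaining {\sc Line} with $S=Z_c$ one checks $d\nmid(c-1)$, so Lemma~\ref{lem:1}(a) applies directly; only {\sc Line}~25 needs the separate observation that $53\nmid|\GL(3,3)|$.

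\smallskip
\textbf{The genuine gap is {\sc Line}~6.} You flag that $\gcd(c,d)=17\neq 1$, so Condition~1 of Lemma~\ref{lem:KSXY}(b) fails; but since $\Soc(G^\cC)=Z_{17}$ is abelian, Condition~2 is not available either. Thus Lemma~\ref{lem:KSXY}(b) simply does not apply to {\sc Line}~6, and ``the same analysis'' cannot be invoked. Your appeal to the conclusion of Theorem~\ref{theorem:2} about point stabilisers is circular ({\sc Line}~6 does not even appear in Table~\ref{table:k}) and does not produce~$M$. The paper instead argues directly: once $K=S=Z_{17}$ and $Y^\cC$ is transitive, choose $M\trianglelefteq G$ with $S<M\leq Y$ and $M/S$ minimal normal in $G^\cC$, so $M/S\cong Z_{17}$; since $M\leq Y=C_G(S)$ we have $S\leq Z(M)$, whence $M/Z(M)$ is cyclic and $M$ is abelian of order $17^2$, giving $M\cong Z_{17}\times Z_{17}$ or $Z_{17^2}$. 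This two-line argument is what is missing from your plan.
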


\begin{proof} For {\sc Lines} 32--35 this is proved in Lemma~\ref{lem:1}~(b).
Suppose therefore that one of the other {\sc Lines} holds. Then, by 
Lemma~\ref{lem:1}~(b)(iii), $K=S=\Soc(G^C)$, and by Table~\ref{T4}, $S$ is
$Z_3^3$ for {\sc Line} $25$, and $Z_c$ with $c$ prime for all the other 
{\sc Lines}. For {\sc Line} $25$, $G/Y\leq \Aut(S)=\GL(3,3)$, and 
so $d=53$ does not divide $|G/Y|$. Hence in this case $Y\not\leq K$, and so 
$Y^\cC\ne1$, which implies that $Y^\cC$ is transitive. For all the other {\sc Lines}
we also have $Y^\cC$ transitive, by Lemma~\ref{lem:1}~(a).

For {\sc Line} 6, since $K=S=Z_{17}$ and $Y^\cC$ is transitive,
there exists a normal subgroup $M$ of $G$ such that $S<M\subseteq Y$
with $M/S=Z_{17}$. So $M=Z_{17}\times Z_{17}$ or $Z_{17^2}$, $M$
is point-regular, and (a) holds. In all the other {\sc Lines}, $\gcd(c,d)=1$, 
and hence the conditions of Lemma~\ref{lem:KSXY}(b) hold  and we obtain
a normal subgroup $M=T\times S$ of $G$, where $T$ is a minimal normal subgroup 
of $G$ and $T^\cC\cong T$ is minimal normal in $G^\cC$.
In all cases $c\nmid |T|$, so $T$ is intransitive on $\cP$ with $c$
orbits of length $d$. The $T$-orbits form a $G$-normal
partition $\cC'$ with $c$ classes of size $d$, and the parameters
for $\cC'$  appear in one of the {\sc Lines} in Table~\ref{T4},
according to the table below.

$$
\begin{array}{|c|c|c|c|c|c|c|c|c|c|}
\hline
\mbox{{\sc Line} for $\cC$} & 1-2& 3& 4 &5 &12-13& 14& 16-17& 18-19& 24\\
\mbox{{\sc Line} for $\cC'$} & 3& 1-2& 5 &4 &14& 12-13& 18-19& 16-17& 25\\
\hline
\end{array}
$$

\medskip\noindent
It follows from Table~\ref{T4} that $T$
must be $Z_d$ for all {\sc Lines} apart from {\sc Line} 24, where $T=Z_3^3$. In all {\sc Lines}
therefore $M=T\times S$ is point-regular, abelian, and normal. 
\hfill$\square$
\end{proof}

\medskip
The next lemma completes our analysis, and thereby completes the proof of Theorem~\ref{theorem:2}.

\begin{lemma}\label{lem:5}
\begin{enumerate}
\item[(a)] If one of {\sc Lines} $4-5, 12-14, 24-25$ holds then one of the {Lines} $1, 3, 4$
of Table~\ref{table:k} holds, respectively.
\item[(b)] {\sc Lines} $1-3, 6, 16-19, 32-38$  give no examples.
\end{enumerate}
\end{lemma}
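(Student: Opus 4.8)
The plan is to treat each remaining {\sc Line} by first exploiting Lemma~\ref{lem:2}, which supplies a point-regular normal abelian subgroup $M$. In every {\sc Line} under consideration we have $M=T\times S$ with $T$ and $S$ the cyclic (or elementary abelian, in the exceptional {\sc Lines} $24$, $25$, $6$) groups listed there, and by Lemma~\ref{lem:1}(b)(iii) the kernel is $K=S=\Soc(G^C)$. Since $\gcd(k,v)=1$ and $M$ is regular, $H:=M$ (or a suitable overgroup of $M$ inside $G$) will be the desired line-regular subgroup once we check that $b=v$, equivalently $r=k$; this holds precisely when $b/v=1$, i.e.\ in {\sc Lines} $7$--$11$, $15$, $26$--$31$, $32$--$35$ --- but in the other {\sc Lines} $b/v>1$, so there $H$ must be found as a proper overgroup of $M$ of index $r/k=b/v$ in the point stabiliser structure. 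Concretely, the first step is: for each {\sc Line} with $b/v>1$, use Lemma~\ref{lem:partition} to identify $\cP$ with $M$ and $G_\alpha$ with a subgroup of $\Aut(T)\times\Aut(S)$, then show $G_\alpha$ contains an element $\phi$ of order $b/v$ acting fixed-point-freely (or with a unique fixed point) so that $H:=M\langle\phi\rangle$ is line-regular. The orders work out: in {\sc Line} $4$, $b/v=3$ and $\Aut(Z_7\times Z_{31})$ has a common $Z_3$; in {\sc Lines} $12$--$14$, $b/v=5$ and $\Aut(Z_{11}\times Z_{41})$ has a $Z_5$; in {\sc Lines} $24$--$25$, $b/v=13$ and $\Aut(Z_3^3\times Z_{53})$ has a $Z_{13}$ (acting irreducibly on $Z_3^3$); in {\sc Lines} $4$--$5$, the $x=5,y=1$ pair forces the type $(1^2,2^2,3)$ and the line-regular group $(Z_7\times Z_{31}):Z_3$. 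This gives part~(a) with $H$ as in Lines $1,3,4$ of Table~\ref{table:k}, and the ``unique fixed point'' claim follows since the $Z_3$, $Z_5$, $Z_{13}$ in question each fix only $\alpha=1_M$ in $M$ (it acts nontrivially on both $T$ and $S$).

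For part~(b) the strategy is to derive a contradiction in each {\sc Line} from the intersection type together with the action of $M\rtimes G_\alpha$. The main tool is counting: since $M$ is regular on $\cP$, a line $\la$ through $\alpha$ is a $k$-subset of $M$ containing $1_M$, and the $\cC$- and $\cC'$-intersection types constrain how the differences $\la\la\ra:=\{\,uv^{-1}\mid u,v\in\la\,\}$ distribute among the cosets of $S$ and of $T$; because $\cS$ is a linear space, these differences cover $M\setminus\{1\}$ with multiplicity $r-1$ (a partial-difference-set / difference-family condition). For {\sc Lines} $1$--$3$ ($v=145=5\cdot29$), {\sc Line} $6$ ($v=289=17^2$), {\sc Lines} $16$--$19$ ($v=221=13\cdot17$), and {\sc Lines} $36$--$38$ ($v=793=13\cdot61$), one shows the required difference configuration of size $k(k-1)$ with the prescribed type cannot exist, typically by a parity or divisibility obstruction modulo the small prime, or by invoking that $G_\alpha\le\Aut(T)\times\Aut(S)$ is too small to act transitively on the $r=|\cL|/v\cdot k/\!\ldots$ lines through $\alpha$. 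For {\sc Lines} $32$--$35$ (where Lemma~\ref{lem:1}(b)(ii) already gave $M=Z_5\times Z_{53}$ normal but did \emph{not} claim an example), we must additionally rule out the existence of the line through $\alpha$ with type $(2,3^2,4)$ or its companions --- here I would use that a line-transitive group of this order on a putative linear space with $v=265$, $k=12$, $b=530$ would force a cyclic $2$-transitive-like configuration contradicted by Lemma~\ref{lem:new}(a),(b) (no involution inverts $M$, every involution fixes a point) combined with $|G_\alpha|$ dividing $\gcd(4,52)\cdot(\text{stuff})$.

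The hard part will be {\sc Lines} $36$--$38$ and {\sc Lines} $16$--$19$: there $b/v=6$ and $2$ respectively, $G_\alpha$ can be moderately large (up to $L_3(3)$ or $L_3(2)$ sitting in the top or bottom group), and a clean theoretical non-existence argument is less obvious than a brute difference-set count. My plan there is to pin down $G_\alpha$ exactly using the primitivity of $G^\cC$ and $G^C$ together with Lemma~\ref{lem:KSXY}: the $L_3(\cdot)$ possibilities in Table~\ref{Table-group} for $G^\cC$ (resp.\ $G^C$) force, via Lemma~\ref{lem:KSXY}(b) condition~$2$, a normal subgroup $T\times S$ with $T^\cC\cong L_3(\cdot)$, but then $T$ is non-abelian and by part~(b)(i) transitive on $\cP$, forcing $c\mid|T|$, which fails numerically (e.g.\ $61\nmid|L_3(3)|$, $17\nmid|L_3(2)|$); so $G^\cC$ and $G^C$ are solvable, $G_\alpha\le Z_{c-1}\times Z_{d-1}$ is abelian of controlled order, and a short congruence argument on the intersection type (the entries must be consistent with a $G_\alpha$-orbit structure on a $k$-set) finishes the contradiction. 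I expect the verification that no consistent line $\la$ exists --- essentially checking a finite list of candidate $k$-subsets of $M$ up to $G_\alpha$-equivalence --- to be the one place where a small explicit computation, rather than pure theory, is cleanest, and I would flag it as such while keeping the theoretical reductions above as the backbone.
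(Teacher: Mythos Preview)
Your setup via Lemma~\ref{lem:2} and Lemma~\ref{lem:partition} is correct, but you are missing the two observations that make the paper's proof short and entirely theoretical, and as a result your plan for part~(b) heads off toward difference-set computations and case analyses that are not needed.

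First, once $M=T\times S$ is regular and you know (from Lemma~\ref{lem:1}(b)(iii) applied to both $\cC$ and $\cC'$) that $K=S$ and $L=T$, Lemma~\ref{lem:partition} gives $G_\alpha\cap\Aut(S)=1$ and $G_\alpha\cap\Aut(T)=1$. Hence the two projections $G_\alpha\to\Aut(T)$ and $G_\alpha\to\Aut(S)$ are both injective, so $G_\alpha$ is cyclic of order dividing $\gcd(c-1,d-1)$, and $b/v$ divides $|G_\alpha|$. You never isolate this constraint, and it is the whole engine of the argument: it immediately pins $|G_\alpha|$ down to a divisor of a small integer.

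Second, for every {\sc Line} in the list $1$--$6$, $12$--$14$, $16$--$19$, $36$--$38$ one has both $x\le 8$ and $y\le 8$. Now invoke Lemma~\ref{lem:w}(b): if $|G|$ were even you would land in case~(iv), and $\cS$ would be one of the four spaces listed there, none of which matches the current parameters. Hence $|G|$ is odd, so $b/v$ is odd. But $b/v$ equals $2,2,4,2,6$ for {\sc Lines} $1$--$3$, $6$, $16$--$19$, $36$--$38$ respectively, all even --- contradiction, and part~(b) for these {\sc Lines} is done. For {\sc Lines} $4$--$5$ and $12$--$14$ the same bounds force $G_\alpha\cong Z_3$ and $Z_5$ respectively, giving the line-regular $H$ of Table~\ref{table:k} directly. (The case $M\cong Z_{17^2}$ in {\sc Line}~6 is disposed of separately: $b/v=4\mid|G_\alpha|$ forces $G_\alpha$ to contain the unique involution of $\Aut(Z_{17^2})$, which is inversion, contradicting Lemma~\ref{lem:new}(b).) Your discussion of ruling out $L_3(3)$ or $L_3(2)$ in $G^\cC$ is redundant: Lemma~\ref{lem:2} has already forced $T$ to be abelian, so those possibilities are gone before Lemma~\ref{lem:5} begins.

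For {\sc Lines} $32$--$35$ your sketch is too vague to succeed. Here $x=13>8$ or $y=13>8$, so the parity trick above does not apply. The paper's argument is concrete: Lemma~\ref{lem:w}(b)(ii) identifies the unique involution $w=h^{26}\in G_\alpha$ (inverting $S$, centralising $T$); since $b=2v$ and every involution fixes a line, $4\mid|G_\alpha|$; Lemma~\ref{lem:w}(a)(ii) says the Sylow $2$-subgroup is cyclic, so there is $u\in G_\alpha$ of order $4$ with $u^2=w$, and $u\in\{h^{13},g^2h^{13}\}$ up to inversion. Either choice fixes setwise a $2$-subset of $T=\Fix_\cP(w)\in\cC'$, hence fixes the line $\lambda'$ through that pair; but the $\cC'$-intersection type is $(1^{10},2)$, so $u$ would fix setwise the ten $\cC'$-classes meeting $\lambda'$ in one point, impossible since $\langle u\rangle$ has orbits of length $4$ on $\cC'\setminus\{T\}$. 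Your proposal does not locate this element $u$ or use the intersection type in this way.
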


\begin{proof}
By Lemma~\ref{lem:2}, $G$ has an abelian point-regular normal
subgroup $M$. Moreover in all {\sc Lines} apart from {\sc Line} 6,
$M=T\times S$ with $|T|=d$ and $|S|=c$, while for {\sc Line} 6
either $M$ has this form with $c=d=17$, or $M\cong Z_{17^2}$.
Note that  $|G/M|$ is divisible by $b/v$, where $b/v$ is as in
Table~\ref{Table-fangli}. We will identify $\cP$ with $M$ so 
that $M$ acts by right multiplication, and taking
$\alpha=1_M$, $G=MG_{\alpha}$ with $G_\alpha\leq \Aut(M)$ acting naturally.

First we deal with {\sc Line} 6 in the case
where $M=Z_{17^2}$. It is convenient here to take $M=\cP$ as the
additive group of integers modulo 289, so that $G_\alpha\leq {\rm
Aut}(M)=\{\sigma_{i}\,\, |\,\, \gcd(17,i)=1\}\cong Z_{17}.Z_{16}$, 
where $\sigma_i: a\longmapsto ai$ ($a\in M$). 
Since $b/v=4$ divides $|G_\alpha|$, $G_\alpha$ contains the involution
$\sigma_{-1}$, and this contradicts Lemma~\ref{lem:new}~(b). 

Hence $M=T\times S$ in all cases.
Let $\cC$ be the set of $S$-orbits and $\cC'$ the set of $T$-orbits,
so that $\cC, \cC'$ are $G$-invariant partitions of $\cP$ with
$|\cC|=d$, $|\cC'|=c$. Let $K=G_{(\cC)}\geq S$, and
$L=G_{(\cC')}\geq T$.

Suppose first that one of the {\sc Lines} 1-6, 12-14, 16-19, or
36-38 holds. Then, by Lemma~\ref{lem:1}, $K=S\cong Z_c$ and
$L=T\cong Z_d$. Also by Lemma~\ref{lem:w}, we may set $T=\langle
t\rangle\cong Z_d$, $S=\langle s\rangle\cong Z_c$, $\cP=M$,
$\alpha=(1,1)$, $\Aut(T)=\langle g\rangle\cong Z_{d-1}$,
$\Aut(S)=\langle h\rangle\cong Z_{c-1}$ and $G_\alpha\leq \langle
g\rangle \times \langle h\rangle$. Since $K=S$ and $L=T$ we have
$G_\alpha\cap \langle g\rangle=  G_\alpha\cap \langle h\rangle=1$.
Thus $G_\alpha$ is cyclic of order dividing $\gcd(c-1,d-1)$, and  $b/v$ divides
$|G/M|=|G_\alpha|$. In all of these {\sc Lines}, both $x\leq8$ and $y\leq8$,
and it follows from Lemma~\ref{lem:w} (b) that $|G|$ is odd, and
in particular $b/v$ is odd. The only possibilities are {\sc Lines} 4-5 with 
$G_\alpha\cong Z_3$, and {\sc Lines} 12-14 with 
$G_\alpha\cong Z_5$. In both cases $G$ is line-regular, and so Line 1 or 3 
of Table~\ref{table:k} holds, respectively.

Now suppose that  one of the {\sc Lines} 24-25 holds. By Lemma~\ref{lem:1},
$K=S$ and $L=T$, and without loss of generality we may take $T=Z_3^3$,
and $S=Z_{53}$. By Lemma~\ref{lem:partition}, $G_\alpha\leq \Aut(T)\times
\Aut(S)=\GL(3,3)\times Z_{52}$ with $G_\alpha\cap
\Aut(T)=G_\alpha\cap \Aut(S)=1$. In particular $G_\alpha$ is cyclic
of order dividing 52 and divisible by $b/v=13$. Thus $G$ has a
line-regular subgroup $H=M.Z_{13}$ with $H_\alpha=Z_{13}$ fixing
only the point $\alpha$, and Line 4 of Table~\ref{table:k} holds.

Finally suppose that one of the {\sc Lines} 32-35 holds. Without 
loss of generality we may take  $T= Z_5$ and $S=Z_{53}$. 
Now $b/\gcd(b,v)=2$ divides $|G_\alpha|$, so $|G|$ is even. By
Lemma~\ref{lem:w}(b)(ii), $G_\alpha$ contains a unique involution
$w$ that inverts $S$ and centralises $T$. 
Now $w=h^{26}$ where $\langle h\rangle ={\rm Aut}(S)$,
and we set  $\langle g\rangle
={\rm Aut}(T)$.
Since $b=2v$, we have
$|G_\alpha|=2|G_\la|$, and since each involution fixes a line it
follows that $|G_\la|$ is even and so $4\mid |G_\alpha|$.
It follows from Lemma~\ref{lem:w}(b) that a
Sylow 2-subgroup of $G_\alpha$ is cyclic so there is an element
$u\in G_\alpha$ of order 4 such that $u^2=w$. The only
possibilities for $u$ are $h^{13}$ and $g^2h^{13}$ or their inverses
$h^{39}$, $g^2h^{39}$ respectively. Thus we may assume that $u=h^{13}$ or
$g^2h^{13}$. In either case $u$ fixes setwise the set $T=\Fix_\cP(w)
\in\cC'$. If $u=h^{13}$ then $u$ fixes $T$ pointwise, while if $u=g^2h^{13}$,
then $u$ fixes $\alpha$ and has two orbits of length $2$ in $T$. In either case
$u$ fixes setwise a pair $\{\beta,\gamma\}$ of points of $T$. Let
$\la'=\la(\beta,\gamma)$ be the line through $\beta$ and
$\gamma$. Since the $\cC'$-intersection type is $(1^{10},2)$ it
follows that $|\la'\cap T|=2$, and that $u$ fixes setwise the 10 
classes $C'\in\cC'$ such that $|C'\cap\la'|=1$. This is a contradiction since
$\langle u\rangle$ fixes the class $T\in\cC'$ and permutes the remaining 
52 classes in orbits of length 4. \hfill$\square$
\end{proof}

\bigskip
Theorem~\ref{theorem:2}
follows from Lemma~\ref{lem:1}-~\ref{lem:5}. 

\bigskip
\noindent{\bf\large Acknowledgments}

The authors are very grateful to Maska Law for running the programs
from \cite{BDLNPZ} for line size $k\leq 12$ and some useful discussion.
They also thank Anton Betten for conducting the searches on the
possibilities in Table~\ref{table:k}.

\bigskip

\end{document}